\newtheorem{thm}{Theorem}[section]
\newtheorem{ex}{Example}[section]
\newtheorem{cor}{Corollary}[section]
\newtheorem{lem}{Lemma}[section]
\newtheorem{prop}{Proposition}[section]
\theoremstyle{definition}
\newtheorem{defn}{Definition}[section]
\theoremstyle{remark}
\newtheorem{rem}{Remark}[section]
\numberwithin{equation}{section}
\begin{document}

\title{On conformable fractional Legendre polynomials and their convergence properties with applications}
\date{}
\author{Mahmoud Abul-Ez$^{a}$\thanks{Corresponding author: mabulez56@hotmail.com},
Mohra Zayed$^{b}$\thanks{mzayed@kku.edu.sa }, 
 \\ Ali Youssef$^{c}$\thanks{alimohammedyouseuf@yahoo.com}  and Manuel De la Sen$^{d}$\thanks{manuel.delasen@ehu.eus} \\ 
\footnotesize $^{a,c}$  Mathematics Department, Faculty of Science,
 Sohag University, Sohag 82524, Egypt.\\ 
\footnotesize $^{b}$ Mathematics Department, College of Science, 
 King Khalid University, Abha, Saudi Arabia.\\
 \footnotesize $^{d}$Institute of Research and Development of Processes, University of the Basque Country,\\
  \footnotesize48940 Leioa (Bizkaia), Spain.
}

\maketitle

\begin{abstract}
The main objective of this paper is to give a wide  study on the conformable fractional Legendre polynomials (CFLPs). This study is assumed to be a generalization and refinement, in an easy way, of the scalar case  into the context of the conformable fractional differentiation. We introduce the  CFLPs via different generating functions and provide some of their main  properties and convergence results. Subsequently, some pure recurrence and differential recurrence relations, Laplace's first integral formula and orthogonal properties are then developed for CFLPs. 
We append our study with presenting shifted CFLPs and describing applicable scheme using the collocation method to solve some fractional differential equations (FDEs) in the sense of conformable derivative. Some useful examples of FDEs are treated to support our theoretical results and examining their exact solutions. To the best of our knowledge, the obtained results are newly presented and could  enrich the fractional theory of special functions.
\end{abstract}

\noindent \textbf{Keywords}: Special functions; Legendre polynomials;  Conformable fractional calculus;
  Fractional differential equations. \vskip1mm 

\section{Introduction}
\noindent
The special functions have an increasing and recognized role in mathematical physics, fractional calculus,  theory of differential equations, quantum mechanics, approximation theory and many branches in science. They have a long history that can be traced back to the past three centuries where the problems of terrestrial and
celestial mechanics, the
boundary value problems of electromagnetism and the eigenvalue
problems of quantum mechanics had been solved.

The theory of fractional calculus is as old as classical calculus and  classified as generalized fractional integrals or derivatives.   
Nowadays, an increasing number of researchers are paying attention to the fractional
calculus, since they found that fractional order derivatives and the description of many
physical phenomena in the real world \cite{podlubny1999fractional}.
For instance, fractional derivatives give a better description of the model of 
 the nonlinear oscillation of earthquake \cite{he1998nonlinear}, also modeling the fluid-dynamics by fractional derivatives can eliminate the deficiency emerging from the occurrence of continuum flow of traffic  \cite{he1999some,moaddy2011non}. Therefore, the fractional calculus became a more convenient tool for the description of  mathematical models in different aspects of physical and dynamical  systems and so forth \cite{agarwal2010survey,kilbas2006theory,kosmatov2016resonant,lu2018time}. 
The fractional order derivative in the sense of  Riemann-Liouville or Caputo  were the main tools to achieve the forth mentioned results.

In 2014, Khalil et al. \cite{khalil2014new} introduced a new well-behaved simple  fractional derivative named the conformable fractional
derivative (CFD) by means of the usual basic limit definition of the derivative and that
break with other definitions. Then, it becomes clear as it is noted from the literature that this new definition seems to be the most convenient one in the
world of fractional calculus, since it can be considered as an elegant extension of the
classical derivative. Moreover, this new definition has developed by Abdeljawad \cite{abdeljawad2015conformable} and also, very recently, has been modified in some sense
 by El-Ajou \cite{el2020modification}.
For recent developments on conformal differentiation we refer to \cite{acan2017conformable,ammi2019existence,anderson2016results,asawasamrit2016periodic,benkhettou2016conformable,bucur2016nonlocal,hesameddini2015numerical,nwaeze2016mean,sitho2018noninstantaneous,unal2017solution}.

 The usability of the conformable derivative notion has wide areas of interest in both theoretical and practical aspects \cite{khitab2005predictive,thomas1999modelling}.
The authors in \cite{anderson2015properties,yang2018conformable,zhao2018new,zhou2018conformable}, provided some applications through partial differential equations (PDEs) in conformable sense.
 Precisely, the  Maxwell's equations have been considered in the 
  the  conformable fractional setting  to describe
electromagnetic fields of media in \cite{zhao2018new} and conformable differential equation (CDE) has been used to
the description of the sub diffusion process in \cite{zhou2018conformable}. Also, 
 some applications in quantum mechanics have been treated in the context of CFD (see for example \cite{anderson2015properties}). 
  
The ordinary differential equations (ODEs) referred to as Legendre's differential equation is common
in engineering and physics. Particularly, it occurs when solving the Laplace's equation in
 spherical coordinates. The importance of Legendre polynomials is realized in 1784 when A. Legendre was studying the attraction of spheroids and ellipsoids. 
These polynomials appear in many different areas of mathematics and physics. They may originate as solutions of the Legendre ODE such as the famous  Helmholtz's equation and analog ODEs in spherical polar 
coordinates. They arise as a consequence of demanding a complete, orthogonal sequence of functions over $\left[ -1,1 \right]$ (Gram-Schmidt orthogonalization). 
In quantum mechanics they represent angular momentum eigen functions. 

Special functions of fractional calculus appeared and received much attention due to its great applications in different disciplines of engineering and science.
As the CFD is essentially a generalized version of the first usual integer derivative,
 we show in the present paper that many known results concerning Legendre type polynomials can be translated and stated in the framework
of the CFD. 

In 2010, Saadatmandi and Dehghan in \cite{saadatmandi2010new} derived operational matrices of fractional derivatives for
 the shifted Legendre polynomials and used them to solve fractional differential equations (FDEs)
with initial boundary conditions via spectral methods. The year after, Rida and Yousef \cite{rida2011fractional}
replaced the integer order derivative in Rodrigues' formula of the Legendre polynomials with
 fractional order derivative. However, the complexity of the resulted functions made them inconvenient
 for solving some  FDEs.
 
Thereafter, in 2013, Kazem et al. \cite{kazem2013fractional} presented an orthogonal Legendre functions with fractional
 order based on the shifted Legendre polynomials, in order to get a numerical solution  for
 some FDEs. The  technique they have used was  precise and effective.
 
In 2014, Abu Hammad and Khalil \cite{hammad2014legendre} studied the Legendre conformable fractional differential
 equation. Analog to the classical context, in certain cases, it is found that some solutions  turned to be fractional polynomials.
 Further, they studied fundamental  properties of such conformal fractional
polynomials. Recently in 2020, Zayed et al. \cite{zayed2020fractional} introduced a generalized study on the shifted Legendre type polynomials of
 arbitrary fractional orders in the Caputo sense utilizing some Rodrigues formulas in the framework of matrices.
Furthermore, they gave orthogonality properties of these polynomials in some particular cases
and suggested an application to solve some kinds of FDEs.

Motivated by the above mentioned studies, we aim in this paper first to complete the work given
 by Abu Hammad and Khalill \cite{hammad2014legendre}.
  Further, we intend to extend the presented new orthogonal functions based on CFLPs to produce useful applications for solving  conformable FDEs. Our results here improve the ones given by Rajkovi\'{c} and Kiryakova \cite{rajkovic2010legendre} in the sense of conformable derivative and develop the work given by various authors in \cite{hammad2014legendre,kazem2013fractional,ccerdik2020numerical,meng2019extremal}. Besides, we obtain 
 exact analytical solutions of some FDEs in conformable sense. 

The remnant of the paper is formulated  as follows. 
We begin in section \ref{Sec2} by presenting some paramount basic concepts  of the conformable fractional calculus. 
Through section \ref{Sec3}, we define conformable fractional Legendre polynomials through generating functions in different ways.
Various forms of CFLPs in terms of hypergeometric formula and Laplace's integral form are established in section \ref{Sec4}. 
The pure recurrence relations and differential recurrence relations are the subject of section \ref{Sec5}. Conformable fractional integral formula of CFLPs is derived in section \ref{Sec6}. In section \ref{Sec7}, we introduce a detailed study on orthogonality property with applications. Section \ref{Sec8} is devoted to the study of shifted CFLPs as well as the collocation method is proposed to solve some FDEs.
 Moreover, we present some illustrative examples to justify our suggested approximation involving the collocation method. Finally, we append by general remarks and conclusions in section \ref{Sec9}.  
\section{Preliminaries}\label{Sec2}
\noindent
Nowadays  fractional calculus can be described via two trends. 
The Riemann-Liouville approach represents the first trend for which the integral operator repeatedly proceeded $n$ times and replaced it by one integral via the famous Cauchy formula where then $n!$ is changed to the Gamma function and hence the fractional integral of noninteger order is defined. 
 Evidently, the Riemann and Caputo fractional derivatives were defined by means of integrals  (see \cite{kilbas2006theory}). 
 The second trend of fractional derivative was introduced by
 Gr\"{u}nwald-Letnikove.  Their technique was based on repeating the derivative $n$ times and then fractionalizing by means of Gamma function in the binomial coefficients. 
 The characterized derivative in this calculus seemed to be complicated and is not necessarily enjoy the same fundamental properties as of  the usual case.

As we have mentioned earlier in the introduction,  the authors
in \cite{khalil2014new} introduced the definition of CFD. The definition was stated in the following way. 
For a function $f:(0,\infty )\rightarrow \mathbb{R},$ the CFD of order $\alpha$ where $0<\alpha \leq 1$ of $f(x)$ at $x>0 $ was defined by \footnote{ Unless it is otherwise stated throughout  the whole paper, the fractional number $\alpha $ takes its value such that $0< \alpha \leq 1$.}
\begin{equation}\label{sec2, eq1}
D^{\alpha }f\left( x\right) =\lim\limits_{\varepsilon \rightarrow 0} \frac{f\left( x+\varepsilon x^{1-\alpha }\right) -f\left( x\right) }{\varepsilon }
\end{equation}
and when $x=0$ we have $D^{\alpha }f(0)=\lim\limits_{x \rightarrow 0^+}D^{\alpha }f(x)$.
\begin{rem}
\noindent
\begin{itemize} 
\item[(1)] It is found in  \cite{khalil2014new} that the CFD behaves well in the product rule and the chain rule unlike the case of  the old fractional calculus where complicated formulas appear. 
\item[(2)] As an unexpected fact, the CFD of a constant function is zero  whereas the case for Riemann-Liouville fractional derivative is not.
\item[(3)] For $\alpha =1$ in  \eqref{sec2, eq1}, one gets easily the analog usual classical derivatives.
Further, note that a function can be $\alpha $-differentiable at a
point even though it is not differentiable, for instance, take $f(x)=2\sqrt{x},$ then $D^{\frac{1}{2}}f\left( x\right) =1$. Thus $D^{\frac{1}{2}}f\left( 0\right) =1$. However, $D^{1}f\left( 0\right) $ does not exist. This differs obviously from  what is known for the classical  derivatives.
\item[(4)] In order to solve the  simple fractional differential equation $D^{\frac{1}{2}}y+y=0,$ by applying the Caputo or Riemann-Liouville definitions, then it is required to use either the Laplace transform or fractional power series technique. However, using conformable definition and the fact $D^{\alpha }(e^{\frac{1}{\alpha }x^{\alpha }})=e^{\frac{1}{\alpha }x^{\alpha }},$ one can easily see that $y=ce^{-2\sqrt{x}}$ is the general solution.
\end{itemize}
\end{rem} 
In view of \eqref{sec2, eq1},  Abu Hammed and Kalil \cite{hammad2014legendre} solved the conformable fractional Legendre differential equation: 
\begin{equation}\label{sec2, eq2}
\left( 1-x^{2\alpha }\right) D^{\alpha }D^{\alpha }y-2\alpha x^{\alpha}D^{\alpha }y+\alpha ^{2}k\left( k+1\right) y=0
\end{equation}
and introduced the conformable fractional Legendre Polynomials CFLPs, $P_{\alpha k}\left( x\right) $ as its solution. In fact we proceed on to give an explicit formula of  $P_{\alpha k}\left( x\right)$ as follows.
The solution of \eqref{sec2, eq2} asserts the following coefficients (see \cite{hammad2014legendre}, Eq. (10)) 
\begin{equation*}
a_{2n}=\frac{\left( -1\right) ^{n}\left( k+2n\right) !\left[ \left( \frac{k}{2}\right) !\right] ^{2}}{k!\left( 2n\right) !\left( \frac{k}{2}+n\right)!\left( \frac{k}{2}-n\right) !}a_{0}
\end{equation*}
The constant $a_{0}$ is usually chosen so that the polynomial solution at $x=1$ equals $1$. So, the value to be given to $a_{0}$ is 
$a_{0}=\left( -1\right) ^{\frac{k}{2}}\frac{k!}{2^{k}\left[ \left( \frac{k}{2}\right) !\right] ^{2}}.$
Since \eqref{sec2, eq2} has $x=0$ as an ordinary point,  its solution will take the form 
\begin{equation}\label{10}
y=\sum\limits_{n=0}^{\infty }a_{2n}x^{2\alpha n}=\sum\limits_{n=0}^{\infty }\frac{\left( -1\right) ^{n+\frac{k}{2}}\left( k+2n\right) !}{2^{k}\left(
2n\right) !\left( \frac{k}{2}+n\right) !\left( \frac{k}{2}-n\right) !}x^{2\alpha n}
\end{equation}
Taking into account that the factorial function is always non-negative, we should have
 $\left( \frac{k}{2}-n\right) \geq 0$ and hence $n\leq \left\lfloor \frac{k}{2}\right\rfloor $ where $\left\lfloor \frac{k}{2}\right\rfloor$ is the usual floor function. Therefore \eqref{10} becomes 
\begin{equation*}
y=\sum\limits_{n=0}^{\left\lfloor \frac{k}{2}\right\rfloor }\frac{\left(-1\right) ^{n+\frac{k}{2}}\left( k+2n\right) !}{2^{k}\left( 2n\right)!\left( \frac{k}{2}+n\right)!
\left( \frac{k}{2}-n\right) !}x^{2\alpha n},
\end{equation*}
where 
$$\left\lfloor \frac{k}{2}\right\rfloor =
\begin{cases}
  \frac{k}{2} & \text{if $k$ even} \\
       
   \frac{k-1}{2} & \text{if $k$ odd}. 
\end{cases}
$$
Put $m=\frac{k}{2}-n,$  it is easy to see
\begin{equation}\label{sec2, eq3}
y=:P_{\alpha k}\left( x\right) =\sum\limits_{n=0}^{\left\lfloor \frac{k}{2}\right\rfloor }
\frac{\left( -1\right) ^{m}\left( 2k-2m\right) !}{2^{k}m!\left( k-m\right) !\left( k-2m\right) !}x^{2\alpha n};~~~ \alpha \in (0,1]
\end{equation}
which is the $\alpha k^{\text{th}}$ CFLPs. Clearly, the first four terms of $P_{\alpha k}\left( x\right)$ are: $P_0(x)=1, P_{\alpha }(x)=x^{\alpha},
 P_{2\alpha }(x)=\frac{1}{2}\left(3x^{2\alpha}-1 \right), P_{3\alpha }(x)=\frac{1}{2}\left(5x^{3\alpha}- 3x^{\alpha} \right). $

 A common tool to be used in much of the work presented here is the rearrangement of terms in iterated series. The following two fundamental lemmas are of the kind needed to simplify many proofs in later work. For the infinite double series we have the following Lemmas (see \cite{rainville1969special}). 
 \begin{lem}\label{Lem1new}
 
 \begin{align}
 \sum_{n=0}^{\infty}\sum_{k=0}^{\infty}a_{k,n}&=\sum_{m=0}^{\infty}\sum_{j=0}^{m}a_{j,m-j}
 =\sum_{n=0}^{\infty}\sum_{k=0}^{n}a_{k,n-k}\\
 \sum_{n=0}^{\infty}\sum_{k=0}^{n}b_{k,n}&=\sum_{n=0}^{\infty}\sum_{k=0}^{\infty}b_{k,n+k}
 \end{align}
 \end{lem}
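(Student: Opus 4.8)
Both displayed lines are purely combinatorial assertions about the order in which the terms of a doubly-indexed array are added, so the plan is to exhibit in each case an explicit bijection between the two index sets under which the two arrays match term by term; the analytic content — that such a rearrangement does not change the value — is then supplied by the classical theorem on (absolutely) convergent double series, for which I would simply point to \cite{rainville1969special}. In every place where these identities are later invoked the array comes from an absolutely convergent expansion, so the hypotheses of that theorem are met; alternatively one may read the identities formally, i.e.\ as identities in the ring of formal power series, in which case no convergence assumption is needed at all.

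For the first chain of equalities I would observe that the iterated sum $\sum_{n\ge 0}\sum_{k\ge 0}a_{k,n}$ runs over the full lattice quadrant $Q=\{(k,n):k\ge 0,\ n\ge 0\}$. Partition $Q$ into the finite anti-diagonals $D_m=\{(k,n)\in Q:\ k+n=m\}$, $m=0,1,2,\dots$; every point of $D_m$ is uniquely of the form $(j,m-j)$ with $0\le j\le m$. Summing first over each $D_m$ and then over $m$ produces $\sum_{m\ge 0}\sum_{j=0}^{m}a_{j,m-j}$, the middle member; renaming the dummy indices $m\mapsto n$ and $j\mapsto k$ gives the right-hand member $\sum_{n\ge 0}\sum_{k=0}^{n}a_{k,n-k}$. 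For the second identity, the left-hand side $\sum_{n\ge 0}\sum_{k=0}^{n}b_{k,n}$ ranges over the triangular set $T=\{(k,n):0\le k\le n\}$, and the map $(k,n)\mapsto(k,\,n-k)$ is a bijection of $T$ onto the full quadrant $\{(k,j):k\ge 0,\ j\ge 0\}$, with inverse $(k,j)\mapsto(k,\,k+j)$. Re-indexing the sum over $T$ by this bijection replaces $b_{k,n}$ by $b_{k,k+j}$ and lets $k$ and $j$ range independently over the nonnegative integers; writing the outcome with $j$ as the outer index and renaming $j\mapsto n$ yields $\sum_{n\ge 0}\sum_{k\ge 0}b_{k,n+k}$, as desired.

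The only genuine subtlety, and hence the step I expect to need the most care to state precisely, is the interchange-of-summation justification: the bijections above do not by themselves force equality of the two iterated sums for a conditionally convergent array, so one must record explicitly that in each later use the double array is absolutely summable (or is being handled at the level of formal series). Everything beyond that is relabeling and is routine.
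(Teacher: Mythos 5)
Your proof is correct: the paper itself offers no argument for this lemma, simply citing Rainville's \emph{Special Functions}, and your diagonal decomposition for the first identity together with the bijection $(k,n)\mapsto(k,n-k)$ for the second is precisely the standard reindexing proof found there. Your explicit caveat that the rearrangement is justified either by absolute summability or by reading the identities formally in the ring of formal power series is the right way to state the hypothesis the paper leaves implicit, so nothing is missing.
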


 \begin{lem}\label{Lem2new}
 
 \begin{align}
 \sum_{n=0}^{\infty}\sum_{k=0}^{\infty}a_{k,n}&=\sum_{n=0}^{\infty}\sum_{k=0}^{\lfloor \frac{n}{2}  \rfloor}a_{k,n-2k}
 \\
 \sum_{n=0}^{\infty}\sum_{k=0}^{\lfloor \frac{n}{2}  \rfloor}b_{k,n}&=\sum_{n=0}^{\infty}\sum_{k=0}^{\infty}b_{k,n+2k}\label{2Lema2}
 \end{align}
 \end{lem}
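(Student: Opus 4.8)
The plan is to regard each of the two displayed identities not as a computation but as a bookkeeping statement: the left‑ and right‑hand sides are merely two ways of summing one and the same doubly‑indexed array, so the only thing to check is that the two underlying sets of lattice points $(k,n)$ actually coincide and that the summand is transported correctly. The sole analytic ingredient is the classical rearrangement theorem for absolutely convergent double series (Fubini--Tonelli), which is the standing hypothesis here exactly as in \cite{rainville1969special}; once the array is pinned down, both orders of summation yield the same number.

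For the first identity I would start from the left side $\sum_{n=0}^{\infty}\sum_{k=0}^{\infty}a_{k,n}$ and regroup the terms according to the value of $m:=n+2k$. Since $k\ge 0$ and $n\ge 0$, for each fixed $m\ge 0$ the pairs with $n+2k=m$ are precisely $k=0,1,\dots,\lfloor m/2\rfloor$ with $n=m-2k\ge 0$, and every pair $(k,n)$ arises for exactly one $m$; hence the sum becomes $\sum_{m=0}^{\infty}\sum_{k=0}^{\lfloor m/2\rfloor}a_{k,m-2k}$, which is the right side after renaming $m$ as $n$. The floor appears automatically: the constraint $m-2k\ge 0$ reads $k\le m/2$, hence $k\le\lfloor m/2\rfloor$ because $k$ is an integer, so one need not separate the cases $m$ even and $m$ odd.

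The second identity \eqref{2Lema2} I would then deduce from the first rather than repeat the argument: apply the first identity with $a_{k,n}$ replaced by $b_{k,n+2k}$. On the left this gives $\sum_{n}\sum_{k}b_{k,n+2k}$, while on the right the summand $a_{k,n-2k}$ becomes $b_{k,(n-2k)+2k}=b_{k,n}$, producing $\sum_{n=0}^{\infty}\sum_{k=0}^{\lfloor n/2\rfloor}b_{k,n}$, which is exactly \eqref{2Lema2}. Absolute convergence is preserved under this relabelling because, via the bijection $(n,k)\mapsto(n+2k,k)$ between the index sets, the two double sums involve literally the same terms. Lemma \ref{Lem1new} is handled by the identical scheme, regrouping instead by $m:=n+k$.

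I do not expect a genuine obstacle; the argument is elementary. The two points needing a little care are (i) keeping the two subscripts of $a$ and $b$ in their correct roles throughout — the first subscript is a passive label and only the second gets shifted — and (ii) recording once that no new convergence question is introduced, since on each side the inner and outer sums range over a single explicitly described subset of $\mathbb{Z}_{\ge 0}^{2}$.
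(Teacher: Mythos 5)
Your argument is correct, and it is essentially the standard one: the paper itself states this lemma without proof, quoting it from Rainville \cite{rainville1969special}, where the justification is exactly your reindexing $m=n+2k$ (under absolute convergence, or for formal power-series manipulation), with the second identity being the same bijection read in the reverse direction. Nothing further is needed.
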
 
 A combination of lemmas \ref{Lem1new} and \ref{Lem2new} gives
 
 \begin{lem}\label{Lem3new}
 \begin{equation}
 \sum_{n=0}^{\infty}\sum_{k=0}^{n}c_{k,n}=\sum_{n=0}^{\infty}\sum_{k=0}^{\lfloor \frac{n}{2}  \rfloor}c_{k,n-k}
 \end{equation}
 \end{lem}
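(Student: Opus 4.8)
The plan is to derive the identity purely by composing the two index rearrangements already in hand, with no analytic input beyond what is needed to justify those. First I would apply the second identity of Lemma~\ref{Lem1new} with $b_{k,n}$ replaced by $c_{k,n}$; this turns the triangular inner sum on the left into an unrestricted double series at the cost of a shift in the second subscript:
\[
\sum_{n=0}^{\infty}\sum_{k=0}^{n}c_{k,n}=\sum_{n=0}^{\infty}\sum_{k=0}^{\infty}c_{k,n+k}.
\]

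Next I would freeze the shifted array: set $a_{k,n}:=c_{k,n+k}$, so that the right-hand side above is exactly $\sum_{n=0}^{\infty}\sum_{k=0}^{\infty}a_{k,n}$. Applying the first identity of Lemma~\ref{Lem2new} to this array converts the full double sum into one whose inner range stops at $\lfloor n/2\rfloor$:
\[
\sum_{n=0}^{\infty}\sum_{k=0}^{\infty}a_{k,n}=\sum_{n=0}^{\infty}\sum_{k=0}^{\lfloor n/2\rfloor}a_{k,n-2k}.
\]
Then I would undo the substitution, using $a_{k,n-2k}=c_{k,(n-2k)+k}=c_{k,n-k}$, which identifies the last sum with $\sum_{n=0}^{\infty}\sum_{k=0}^{\lfloor n/2\rfloor}c_{k,n-k}$. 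Reading the three displays in order yields the assertion.

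The computation is short, and the only point demanding genuine attention --- hence the step I would flag as the main (if modest) obstacle --- is keeping the role of the outer index $n$ straight across the two substitutions, since $n$ is reused with a different meaning after each lemma is invoked. In particular one must verify that the composite replacement $c_{k,m}\mapsto a_{k,m-k}$ followed by $n\mapsto n-2k$ in the second subscript lands on $c_{k,n-k}$ and not on $c_{k,n-2k}$; a quick bookkeeping check --- collecting on each side all terms whose second subscript equals a fixed value $m$, which on both sides are precisely the $c_{k,m}$ with $0\le k\le m$ --- confirms the matching. If one wishes to be scrupulous about rearranging infinite series, it suffices to note that all three manipulations above fall under the hypotheses in which Lemmas~\ref{Lem1new} and \ref{Lem2new} are stated, so no additional convergence argument is needed.
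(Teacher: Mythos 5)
Your argument is correct and is exactly the route the paper intends: the paper gives no written proof, asserting only that Lemma~\ref{Lem3new} follows by "a combination of Lemmas~\ref{Lem1new} and \ref{Lem2new}," and your chain (second identity of Lemma~\ref{Lem1new}, then the first identity of Lemma~\ref{Lem2new} applied to $a_{k,n}=c_{k,n+k}$) is precisely that combination, with the index bookkeeping $a_{k,n-2k}=c_{k,n-k}$ carried out correctly. Nothing further is needed.
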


\begin{rem}\label{Remark*}
The orthogonal property of Legendre polynomials is so important in many physical applications on the interval $[-1,1]$. So that the authors in \cite{hammad2014legendre}
studied the orthogonality of the conformable fractional Legendre polynomials CFLPs on such interval. 
For this purpose the authors extended the definition of CFD \eqref{sec2, eq1} to include the
negative values of $x,$ by assuming $\alpha $  to be of the form $\frac{1}{k},$ with $k$ an odd natural number, $k=2j+1$  for $j$  any natural number.
In such a case $x^{1-\alpha }$ will be defined for all $x\in \mathbb{R},$ and then $x^{\alpha n}$ is defined for all $x\in \mathbb{R}$ and all $n.$ Therefore, the authors in \cite{hammad2014legendre} gave the extended definition of the CFD as follows.
\end{rem}

\begin{defn}\label{def 2.1}
For $0<\alpha <\frac{1}{2j+1}\leq 1$ and  $j\in \mathbb{N}$, the $\alpha$-derivative of $f:(0,\infty)\rightarrow \mathbb{R}$ is  $$D^{\alpha }f\left( x\right)=\lim\limits_{\varepsilon \rightarrow 0}\frac{f\left( x+\varepsilon
x^{1-\alpha }\right) -f\left( x\right) }{\varepsilon },x\neq 0.$$ If $x=0,$ then $D^{\alpha }f\left( 0\right) =\lim\limits_{x\rightarrow 0^{+}}D^{\alpha}f\left( x\right)$ provided the limits exist.
\end{defn}
Consequently this definition shows that for the case of fractional polynomials one  should have \ $D^{\alpha }x^{\alpha n}=\alpha nx^{\alpha \left( n-1\right) },$ for all $x\in \mathbb{R}.$  
 
Moreover, for the $\alpha$-fractional integral of a function $f$,
the authors in \cite{khalil2014new}  suggested the following definition. 

\begin{defn}\label{DefFI}
Suppose that $f:(0,\infty)\rightarrow \mathbb{R}$ is $\alpha$-differentiable, $\alpha \in (0,1]$, then  the  $\alpha$-fractional integral of $f$  is defined by
\begin{equation*}
I_{\alpha }^{a}f\left(t \right) = I_{1 }^{a}\left( t^{\alpha -1}f \right)=
  \int\limits_{a}^{t} \frac{f\left( x\right) }{x^{1-\alpha }}dx,\ t\geq 0.
\end{equation*}
\end{defn}

In that context, they also showed that $D^{\alpha}\left( I^{\alpha}_{a}(f) \right)(t)=f(t)$. According to the discussion in Remark \ref{Remark*},  let $I_{\alpha }^{-1}\left( f\right) \left( 1\right) =\int\limits_{-1}^{1} \frac{f\left( x\right) }{x^{1-\alpha }}dx,$  from which  the orthogonality
   property  of $P_{\alpha n} (x)$ for $n\neq m$ was given as  \cite{hammad2014legendre}
\begin{equation}\label{sec2, eq4}
\int\limits_{-1}^{1}P_{\alpha n}\left( x\right) P_{\alpha m}\left( x\right)
x^{1-\alpha }dx=0,~~~m\neq n.
\end{equation}

In the sequel we are going to develop this orthogonality property for the case $n=m$ in order to be usable in giving some applications. 
Motivated by the above discussions and along with the work given in literature concerning CFLPs, further investigations involving CFLPs will be explored throughout the present work. We start with defining CFLPs via different generating functions as follows.
 

\section{Conformable  fractional Legendre polynomials within generating functions}\label{Sec3}
\noindent
Along with the scalar case, one can give a generating function to define the CFLPs, $P_{\alpha n}\left( x\right) $ through the following result as follows.

\begin{thm}\label{sec3, thm1}
For  $\alpha \in (0,1],$  the generating relation of CFLPs, $P_{\alpha n}\left( x\right) $ can be given by the following formula 
\begin{equation}\label{sec3, eq1}
g(x,t)=\frac{1}{\sqrt{1-2x^{\alpha }t^{\alpha }+t^{2\alpha }}}=\sum\limits_{n=0}^{\infty }P_{\alpha n}\left( x\right) t^{\alpha n}.
\end{equation}
\end{thm}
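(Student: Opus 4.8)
The plan is to reproduce the classical derivation of the Legendre generating function (as in Rainville), but carrying the power $t^{\alpha}$ in the role of $t$ and $x^{\alpha}$ in the role of $x$ throughout, and then to match the resulting coefficients with the explicit formula \eqref{sec2, eq3}. First I would write
\[
g(x,t)=\bigl(1-(2x^{\alpha}t^{\alpha}-t^{2\alpha})\bigr)^{-1/2}
\]
and expand it by the generalized binomial series $(1-z)^{-1/2}=\sum_{n\ge 0}\frac{(\frac12)_{n}}{n!}z^{n}$, where $(\tfrac12)_{n}$ denotes the Pochhammer symbol, applied with $z=2x^{\alpha}t^{\alpha}-t^{2\alpha}=t^{\alpha}(2x^{\alpha}-t^{\alpha})$. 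This step is legitimate provided $|x^{\alpha}|\le 1$ and $|t|$ is small enough that $|2x^{\alpha}t^{\alpha}-t^{2\alpha}|<1$ (on the extended domain of Definition \ref{def 2.1} the symbols $x^{\alpha},t^{\alpha}$ are interpreted there accordingly). Applying the ordinary binomial theorem to $(2x^{\alpha}-t^{\alpha})^{n}$ then produces a double series whose general term is a multiple of $(t^{\alpha})^{n+k}$ with $0\le k\le n$.

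To read off the coefficient of $t^{\alpha n}$, I would invoke the rearrangement results of Section \ref{Sec2}: Lemma \ref{Lem3new} (equivalently, Lemma \ref{Lem1new} combined with Lemma \ref{Lem2new}) converts a sum of the shape $\sum_{n}\sum_{k=0}^{n}$ carrying the exponent $(t^{\alpha})^{n+k}$ into one of the shape $\sum_{n}\sum_{k=0}^{\lfloor n/2\rfloor}$ carrying the exponent $(t^{\alpha})^{n}$. This yields
\[
g(x,t)=\sum_{n=0}^{\infty}\Bigl(\sum_{k=0}^{\lfloor n/2\rfloor}\frac{(-1)^{k}\,(\tfrac12)_{n-k}}{k!\,(n-2k)!}\,(2x^{\alpha})^{n-2k}\Bigr)t^{\alpha n}.
\]
It then remains to identify the inner sum with $P_{\alpha n}(x)$. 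Using $(\tfrac12)_{m}=\dfrac{(2m)!}{4^{m}m!}$ I would rewrite $\dfrac{(\tfrac12)_{n-k}}{k!\,(n-2k)!}\,2^{n-2k}=\dfrac{(2n-2k)!}{2^{n}\,k!\,(n-k)!\,(n-2k)!}$, so that the coefficient of $t^{\alpha n}$ becomes $\sum_{k=0}^{\lfloor n/2\rfloor}\dfrac{(-1)^{k}(2n-2k)!}{2^{n}k!(n-k)!(n-2k)!}\,(x^{\alpha})^{n-2k}$, which coincides with \eqref{sec2, eq3} after the obvious relabeling of the summation indices (the polynomial index $k$ there playing the role of $n$ here, the inner index $m$ there the role of $k$ here, and $x^{2\alpha n_{\mathrm{old}}}=(x^{\alpha})^{n-2k}$).

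The only delicate points are analytic rather than algebraic: one must justify the termwise binomial expansion and the interchange of the two summations, i.e.\ absolute convergence of the resulting double series, which holds uniformly on compact subsets of the admissible region for $(x,t)$, together with a small amount of care with the floor function when $n$ is odd. There is no conceptual obstacle. I would also remark that a shorter route is available: formula \eqref{sec2, eq3} already exhibits $P_{\alpha n}(x)$ as the classical Legendre polynomial $P_{n}$ evaluated at $x^{\alpha}$, whence the claimed relation follows at once by substituting $u\mapsto x^{\alpha}$ and $t\mapsto t^{\alpha}$ in the scalar identity $(1-2ut+t^{2})^{-1/2}=\sum_{n\ge 0}P_{n}(u)\,t^{n}$.
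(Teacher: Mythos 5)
Your proposal is correct and follows essentially the same route as the paper: expand $(1-(2x^{\alpha}t^{\alpha}-t^{2\alpha}))^{-1/2}$ by the binomial (${}_1F_0$) series, apply the binomial theorem to the inner power, rearrange the double series via Lemma \ref{Lem3new}, and identify the coefficient of $t^{\alpha n}$ with \eqref{sec2, eq3} through the Pochhammer identity $(\tfrac12)_{m}=\frac{(2m)!}{4^{m}m!}$. Your closing observation — that the identity also follows instantly from the classical generating function since $P_{\alpha n}(x)=P_{n}(x^{\alpha})$ — is a valid shortcut the paper does not mention, but it does not change the fact that your main argument coincides with the paper's proof.
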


\begin{proof}
 The function $\left( 1-2x^{\alpha }t^{\alpha }+t^{2\alpha}\right) ^{\frac{-1}{2}}$ can be formed by means of the Gauss hypergeometric function as
\begin{equation*}
\left( 1-2x^{\alpha }t^{\alpha }+t^{2\alpha }\right) ^{\frac{-1}{2}}= ~_{1}F_{0}\left( \frac{1}{2};-;2x^{\alpha }t^{\alpha }-t^{2\alpha}\right).
\end{equation*}
Therefore,
\begin{eqnarray*}
\begin{aligned}
\left( 1-2x^{\alpha }t^{\alpha }+t^{2\alpha }\right) ^{\frac{-1}{2}}&= ~_{1}F_{0}\left( \frac{1}{2};-;2x^{\alpha }t^{\alpha }-t^{2\alpha}\right)\\
&=\sum\limits_{n=0}^{\infty }\frac{\left( \frac{1}{2}\right)
_{n}\left( 2x^{\alpha }t^{\alpha }-t^{2\alpha }\right)
^{n}}{n!}\\
&=\sum\limits_{n=0}^{\infty }\sum\limits_{k=0}^{n}\frac{\left( \frac{1}{2}
\right) _{n}}{~n!}\frac{n!\left( -1\right) ^{k}2^{n-k}}{\left( n-k\right) !k!}x^{\alpha \left(
n-k\right) }t^{\alpha \left( n+k\right) }
\end{aligned}
\end{eqnarray*}
With the help of lemma \ref{Lem3new}, we infer that
\begin{equation*}
\begin{split}
\left( 1-2x^{\alpha }t^{\alpha }+t^{2\alpha }\right) ^{\frac{-1}{2}}&=~\sum\limits_{n=0}^{\infty }\sum\limits_{k=0}^{\left\lfloor\frac{n}{2}
\right\rfloor }\frac{\left( -1\right) ^{k}\left( \frac{1}{2}\right)
_{n-k}2^{n-2k}}{\left( n-2k\right) !k!}x^{\alpha \left( n-2k\right) }  t^{\alpha n}\\
&=\sum\limits_{n=0}^{\infty }\sum\limits_{k=0}^{\left\lfloor \frac{n}{2}
\right\rfloor }\frac{\left( -1\right) ^{k}\left( 2n-2k\right) !}{2^{n}(n-k)!\left( n-2k\right) !k!} x^{\alpha \left( n-2k\right) }  t^{\alpha n} 
=\sum\limits_{n=0}^{\infty }P_{n\alpha }\left( x\right) t^{n\alpha}
\end{split}
\end{equation*}
and the result follows.
\end{proof}

\subsection{Further generating functions}
The previously mentioned expression  $g(x,t)= \left( 1-2x^{\alpha }t^{\alpha }+t^{2\alpha}\right) ^{\frac{-1}{2}}, $  used to define CFLPs,
can be rewritten in different ways as an expansion in powers of $t$ to yield additional results. Consider
\begin{equation*}
\begin{split}
\left(1-2x^{\alpha }t^{\alpha }+t^{2\alpha }\right) ^{\frac{-1}{2}}&=\left(1-x^{\alpha }t^{\alpha }\right) ^{-1}\left( 1-\frac{t^{2\alpha }
\left(x^{2\alpha }-1\right)}{\left( 1-x^{\alpha }t^{\alpha }\right) ^{2}}\right) ^{\frac{-1}{2}}\\
&=\left( 1-x^{\alpha }t^{\alpha }\right) ^{-1}\
_{1}F_{0}\left( \frac{1}{2};-;\frac{t^{2\alpha }\left( x^{2\alpha }-1\right) }{
\left( 1-x^{\alpha }t^{\alpha }\right) ^{2}}\right) \\
&=\left( 1-x^{\alpha }t^{\alpha }\right) ^{-1}\ \sum\limits_{k=0}^{\infty }
\frac{\left( \frac{1}{2}\right) _{k}}{~k!}\frac{t^{2k\alpha }\left(
x^{2\alpha }-1\right) ^{k}}{\left( 1-x^{\alpha }t^{\alpha }\right) ^{2k}} \\
&=\sum\limits_{k=0}^{\infty }\frac{\left( \frac{1
}{2}\right) _{k}~\ t^{2k\alpha }\left( x^{2\alpha }-1\right) ^{k}}{k!}
~_{1}F_{0}\left( 2k+1;-;x^{\alpha }t^{\alpha }\right) \\
&=\sum\limits_{n=0}^{\infty}\sum\limits_{k=0}^{\infty }\frac{\left( \frac{1}{2}\right) _{k}~\left( n+2k\right) !\ 
\left( x^{2\alpha }-1\right)
^{k}x^{n\alpha }}{\left(2k\right) !k!n!}t^{\alpha \left( n+2k\right) }
\end{split}
\end{equation*}
Using lemma \ref{Lem2new}, we can see that 
\begin{eqnarray}\label{sec3, eq3}
\left( 1-2x^{\alpha }t^{\alpha }+t^{2\alpha }\right) ^{\frac{-1}{2}
}=\sum\limits_{n=0}^{\infty }\sum\limits_{k=0}^{\left\lfloor \frac{n}{2}
\right\rfloor }\frac{\left( \frac{1}{2}\right) _{k}~n!\ \left( x^{2\alpha
}-1\right) ^{k}x^{\alpha \left( n-2k\right) }}{\left( 2k\right) !k!\left(
n-2k\right) !}t^{\alpha n}
\end{eqnarray}
Thus it can be verified from the formula \eqref{sec3, eq1}  that
\begin{equation}\label{sec3, eq4}
P_{\alpha n}\left( x\right) =\sum\limits_{k=0}^{\left\lfloor \frac{n}{2}%
\right\rfloor }\frac{\left( \frac{1}{2}\right) _{k}~n!\ }{\left( 2k\right)
!k!\left( n-2k\right) !}x^{\alpha \left( n-2k\right) }\left( x^{2\alpha
}-1\right) ^{k}
\end{equation}
\begin{rem} (Additional useful formula of $P_{\alpha n}\left(x\right) $) \newline
Again, reformulation of the generating function $g(x,t)$ given in \eqref{sec3, eq1} yields

\begin{equation*}
\begin{split}
\left( 1-2x^{\alpha }t^{\alpha }+t^{2\alpha }\right) ^{\frac{-1}{2}
}&=\left( 1+t^{\alpha }\right)^{-1}\left( 1-\frac{2t^{\alpha }\left(
x^{\alpha }+1\right) } {\left(1+t^{\alpha }\right) ^{2}}\right) ^{\frac{-1}{2
}}\\
&=\left( 1+t^{\alpha}\right) ^{-1}~_{1}F_{0}\left( \frac{1}{2};-;\frac{
2t^{\alpha }\left(x^{\alpha }+1\right) }{\left( 1+t^{\alpha }\right) ^{2}}
\right) \\
&=\sum\limits_{k=0}^{
\infty }\frac{\left( \frac{1}{2}\right) _{k}}{~k!}\frac{2^{k}t^{\alpha k}
\left(x^{\alpha }+1\right) ^{k}}{\left( 1+t^{\alpha }\right) ^{2k+1}} \\
&=\sum\limits_{k=0}^{\infty }\frac{\left( \frac{1}{2}\right)
_{k}2^{k}t^{\alpha k}\left( x^{\alpha }+1\right) ^{k}}{~k!}~_{1}F_{0}\left(
2k+1;-;-t^{\alpha }\right) \\
&=\sum\limits_{n=0}^{\infty }\sum\limits_{k=0}^{\infty }\frac{\left( \frac{1
}{2}\right) _{k}~\left( -1\right) ^{n}\ 2^{k}~\left( 2k+1\right) _{n}~\left(
x^{\alpha }+1\right) ^{k}}{~k!n!}t^{\alpha \left( n+k\right) } \\
&=\sum\limits_{n=0}^{\infty } \sum\limits_{k=0}^{\infty }\frac{~\ ~\left(
-1\right) ^{n}\left( n+2k\right) !~\left( x^{\alpha }+1\right) ^{k}}{
2^{k}~\left( k!\right) ^{2}n!}t^{\alpha \left( n+k\right) }
\end{split}
\end{equation*}
The use of  lemma \ref{Lem1new} implying that  
\begin{equation*}
\left( 1-2x^{\alpha }t^{\alpha }+t^{2\alpha }\right) ^{\frac{-1}{2}
}=\sum\limits_{n=0}^{\infty }\bigskip \sum\limits_{k=0}^{n }\frac{
~\left( -1\right) ^{n+k}\left( n+k\right) !~\left( x^{\alpha }+1\right) ^{k}
}{2^{k}~\left( k!\right) ^{2}\left( n-k\right) !}t^{\alpha n}
\end{equation*}
Thus according to the generating function \eqref{sec3, eq1}, it follows that
\begin{equation}\label{sec3, eq8}
P_{n\alpha }\left( x\right)  =\sum\limits_{k=0}^{n}\frac{
\left( -1\right) ^{n+k}\left( n+k\right) }{ \left( k!\right) ^{2}\left(
n-k\right) !}\left( \frac{x^{\alpha }+1}{2} \right) ^{k}
\end{equation}
\end{rem}
We append this section by the following interesting result.
\begin{thm}\label{sec3, thm2}
For $\alpha \in (0,1],$ the CFLPs
can be written through the hypergeometric function in the form
\begin{equation}\label{sec3, eq5}
\left( 1-x^{\alpha }t^{\alpha }\right) ^{-c}~_{2}F_{1}\left( \frac{1}{2}c,
\frac{1}{2}c+\frac{1}{2};1;\frac{\left( x^{2\alpha }-1\right) t^{2\alpha }}{
\left( 1-x^{\alpha }t^{\alpha }\right) ^{2}}\right)
=\sum\limits_{n=0}^{\infty }\frac{\left( c\right) _{n}P_{\alpha n}\left(
x\right) }{n!}t^{\alpha n}, 
\end{equation}
where $c$ is an arbitrary real number.
\end{thm}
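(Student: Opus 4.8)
The plan is to verify \eqref{sec3, eq5} by starting from the right-hand side, inserting the closed form \eqref{sec3, eq4} of $P_{\alpha n}(x)$, and massaging the resulting double series into the product of the elementary factor $(1-x^\alpha t^\alpha)^{-c}$ with a ${}_2F_1$. Substituting \eqref{sec3, eq4} and cancelling the $n!$ gives
\[
\sum_{n=0}^{\infty}\frac{(c)_n P_{\alpha n}(x)}{n!}\,t^{\alpha n}
=\sum_{n=0}^{\infty}\sum_{k=0}^{\lfloor n/2\rfloor}\frac{(c)_n\left(\tfrac12\right)_k}{(2k)!\,k!\,(n-2k)!}\,x^{\alpha(n-2k)}\bigl(x^{2\alpha}-1\bigr)^k t^{\alpha n}.
\]
Applying the reindexing \eqref{2Lema2} of Lemma \ref{Lem2new} (i.e.\ replacing $n$ by $n+2k$) frees the inner summation and, after interchanging the two sums, yields
\[
\sum_{k=0}^{\infty}\frac{\left(\tfrac12\right)_k}{(2k)!\,k!}\bigl(x^{2\alpha}-1\bigr)^k t^{2\alpha k}\sum_{n=0}^{\infty}\frac{(c)_{n+2k}}{n!}\,(x^{\alpha}t^{\alpha})^{n}.
\]

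Next I would split the Pochhammer symbol as $(c)_{n+2k}=(c)_{2k}(c+2k)_n$, so the inner $n$-sum becomes ${}_1F_0(c+2k;-;x^\alpha t^\alpha)=(1-x^\alpha t^\alpha)^{-(c+2k)}$, valid for $|x^\alpha t^\alpha|<1$. Pulling the common factor $(1-x^\alpha t^\alpha)^{-c}$ out of the remaining $k$-series leaves
\[
\bigl(1-x^\alpha t^\alpha\bigr)^{-c}\sum_{k=0}^{\infty}\frac{\left(\tfrac12\right)_k (c)_{2k}}{(2k)!\,k!}\left(\frac{(x^{2\alpha}-1)t^{2\alpha}}{(1-x^\alpha t^\alpha)^2}\right)^{k}.
\]
The last step is to identify this $k$-series as ${}_2F_1\!\left(\tfrac{c}{2},\tfrac{c+1}{2};1;\cdot\right)$ via the duplication identities $(2k)!=2^{2k}k!\left(\tfrac12\right)_k$ and $(c)_{2k}=2^{2k}\left(\tfrac{c}{2}\right)_k\left(\tfrac{c+1}{2}\right)_k$, which give $\dfrac{\left(\tfrac12\right)_k (c)_{2k}}{(2k)!\,k!}=\dfrac{\left(\tfrac{c}{2}\right)_k\left(\tfrac{c+1}{2}\right)_k}{(1)_k\,k!}$; this is precisely the coefficient in the hypergeometric series defining ${}_2F_1\!\left(\tfrac{c}{2},\tfrac{c+1}{2};1;z\right)$, establishing \eqref{sec3, eq5}. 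As a consistency check, taking $c=1$ reduces the right side to $\sum_n P_{\alpha n}(x)t^{\alpha n}$ and, using ${}_2F_1(\tfrac12,1;1;z)=(1-z)^{-1/2}$, the left side to $(1-2x^\alpha t^\alpha+t^{2\alpha})^{-1/2}$, recovering Theorem \ref{sec3, thm1}.

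I expect the only real care to be needed in the bookkeeping: applying Lemma \ref{Lem2new} in the correct direction and tracking the factor $(n-2k)!$ when it is absorbed into the shift, and restricting $|t^\alpha|$ small enough that both $|x^\alpha t^\alpha|<1$ and $\bigl|(x^{2\alpha}-1)t^{2\alpha}/(1-x^\alpha t^\alpha)^2\bigr|<1$ hold, so that the term-by-term manipulations and the two interchanges of summation are legitimate. No analytic ingredient beyond the binomial series ${}_1F_0$ and the duplication formula for the Pochhammer symbol is required, so I anticipate no genuine obstacle once the indices are handled carefully.
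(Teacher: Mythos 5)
Your proposal is correct and follows essentially the same route as the paper: substitute the closed form \eqref{sec3, eq4} into the series, reindex with relation \eqref{2Lema2} of Lemma \ref{Lem2new}, sum the inner ${}_1F_0(c+2k;-;x^{\alpha}t^{\alpha})$ to $(1-x^{\alpha}t^{\alpha})^{-(c+2k)}$, and use the Pochhammer duplication identities to identify the remaining $k$-series as the stated ${}_{2}F_{1}$. The only differences are cosmetic (you split $(c)_{n+2k}$ after interchanging the sums rather than before, and you add the convergence remark and the $c=1$ consistency check), so no further changes are needed.
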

\begin{proof}
Employing \eqref{sec3, eq4} one may get a new generating function for CFLPs, $P_{\alpha n}\left( x\right) $. Then, for arbitrary real number $c$, we have 
\begin{equation}\label{sec3, eq6}
\sum\limits_{n=0}^{\infty }\frac{\left( c\right) _{n}P_{\alpha n}\left(
x\right) }{n!}t^{\alpha n}=\sum\limits_{n=0}^{\infty
}\sum\limits_{k=0}^{\left\lfloor \frac{n}{2}\right\rfloor }\frac{\left(
c\right) _{n}\left( \frac{1}{2}\right) _{k}~x^{\alpha \left( n-2k\right)
}\left( x^{2\alpha }-1\right) ^{k}}{\left( 2k\right) !k!\left( n-2k\right) !}
t^{\alpha n}
\end{equation}
Owing to the  relation \eqref{2Lema2}  and lemma \ref{Lem2new}, we rewrite  \eqref{sec3, eq6}   in the form
\begin{equation}\label{sec3, eq7}
\sum\limits_{n=0}^{\infty }\frac{\left( c\right) _{n}P_{\alpha n}\left(
x\right) }{n!}t^{\alpha n}=\sum\limits_{n=0}^{\infty
}\sum\limits_{k=0}^{\infty }\frac{\left( c\right) _{n+2k}\left( \frac{1}{2}
\right) _{k}~x^{\alpha n}\left( x^{2\alpha }-1\right) ^{k}}{\left( 2k\right)
!k!n!}t^{\alpha \left( n+2k\right) }
\end{equation}
But Pochhammer symbol properties give
\begin{equation*}
\begin{split}
\left( c\right) _{n+2k}=\left( c+2k\right) _{n}\left( c\right) _{2k}&=\left(
c+2k\right) _{n}\left( \frac{1}{2}c\right) _{k}\left( \frac{1}{2}c+\frac{1}{2
}\right) _{k}2^{2k}\\
&=\left( c+2k\right) _{n}\left( \frac{1}{2}c\right)
_{k}\left( \frac{1}{2}c+\frac{1}{2}\right) _{k}\frac{\left( 2k\right) !}{
k!\left( \frac{1}{2}\right)_{k}}.
\end{split}
\end{equation*}
Therefore, in view of \eqref{sec3, eq7}, we get
\begin{equation*}
\begin{split}
\sum\limits_{n=0}^{\infty }\frac{\left( c\right) _{n}P_{\alpha n}\left(
x\right) }{n!}t^{\alpha n} &=\sum\limits_{k=0}^{\infty}\sum\limits_{n=0}^{\infty }\frac{\left( c+2k\right) _{n}\left( x^{\alpha}t^{\alpha }\right) 
^{n}}{n!}\frac{~\left( \frac{1}{2}c\right) _{k}\left( \frac{1}{2}c+\frac{1}{2}\right) _{k}\left( x^{2\alpha }-1\right) ^{k}}{\left( k!\right) ^{2}}t^{\alpha \left( 2k\right) } \\
&=\sum\limits_{k=0}^{\infty }\ _{1}F_{0}\left( c+2k;-;x^{\alpha }t^{\alpha}\right) \frac{~\left( \frac{1}{2}c\right) _{k}\left( \frac{1}{2}c+\frac{1}{2}\right) _{k}
\left( x^{2\alpha }-1\right) ^{k}}{\left( k!\right) ^{2}}t^{2\alpha k} \\
&=\left( 1-x^{\alpha }t^{\alpha }\right) ^{-c}\sum\limits_{k=0}^{\infty }\ 
\frac{~\left( \frac{1}{2}c\right) _{k}\left( \frac{1}{2}c+\frac{1}{2}\right)
_{k}\left( x^{2\alpha }-1\right) ^{k}t^{2\alpha k}}{\left( k!\right)
^{2}\left( 1-x^{\alpha }t^{\alpha }\right) ^{2k}} \\
&=\left( 1-x^{\alpha }t^{\alpha }\right) ^{-c}~_{2}F_{1}\left( \frac{1}{2}c,
\frac{1}{2}c+\frac{1}{2};1;\frac{\left( x^{2\alpha }-1\right) t^{2\alpha }}{
\left( 1-x^{\alpha }t^{\alpha }\right) ^{2}}\right) 
\end{split}
\end{equation*}
\end{proof}
\begin{rem}
In the previous theorem \ref{sec3, thm2}, as  $c$  be any real number, thus for $c=1,$  \eqref{sec3, eq5}
generates into   \eqref{sec3, eq1}. If $c=0$ or $c$ is a negative integer, both sides of  \eqref{sec3, eq5}  terminate, and only a finite number of terms  of
CFLPs is then generated by  \eqref{sec3, eq5}.  
\end{rem}
\section{Hypergeometric and integral forms of CFLPs}\label{Sec4}
In this section we introduce the CFLPs, $P_{\alpha n}\left(x\right) $ in terms of the Gauss hypergeometric form. Three different formula’s  are obtained via the next given results. Moreover, we establish Laplace's first integral form of CFLPs. 
\begin{thm}
For $\alpha \in (0,1],$ the CFLPs,  $P_{\alpha n}\left(x\right) $ can be defined  by means of Gauss hypergeometric function as 
\begin{equation}\label{sec4, eq1}
P_{\alpha n}\left( x\right) =~_{2}F_{1}\left( -n,n+1;1;\frac{1-x^{\alpha }}{2}\right) 
\end{equation}
\end{thm}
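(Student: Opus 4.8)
The plan is to identify the right-hand side of \eqref{sec4, eq1} with one of the closed forms of $P_{\alpha n}(x)$ already at hand, namely formula \eqref{sec3, eq8}. Since $P_{\alpha n}(x)$ depends on $x$ only through $x^{\alpha}$, I will regard it throughout as a polynomial $Q_{n}(y)$ in the single variable $y:=x^{\alpha}$, so that the substitution $y\mapsto -y$ used below is a purely formal (polynomial) operation and raises no analytic concern about $x^{\alpha}$ for negative $x$.

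First I record the parity of $Q_{n}$. Writing \eqref{sec3, eq4} in the variable $y$ as $Q_{n}(y)=\sum_{k=0}^{\lfloor n/2\rfloor}\frac{(\frac12)_{k}\,n!}{(2k)!\,k!\,(n-2k)!}\,y^{\,n-2k}\,(y^{2}-1)^{k}$, each summand is multiplied by $(-1)^{\,n-2k}=(-1)^{n}$ under $y\mapsto -y$, so that $Q_{n}(-y)=(-1)^{n}Q_{n}(y)$. Next I apply this to \eqref{sec3, eq8}, which in the variable $y$ reads $Q_{n}(y)=\sum_{k=0}^{n}\frac{(-1)^{n+k}(n+k)!}{(k!)^{2}(n-k)!}\bigl(\tfrac{y+1}{2}\bigr)^{k}$: replacing $y$ by $-y$ turns $\tfrac{y+1}{2}$ into $\tfrac{1-y}{2}$, and multiplying by $(-1)^{n}$ gives
\[
Q_{n}(y)=(-1)^{n}Q_{n}(-y)=\sum_{k=0}^{n}\frac{(-1)^{k}(n+k)!}{(k!)^{2}(n-k)!}\Bigl(\frac{1-y}{2}\Bigr)^{k}.
\]

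Finally I convert the coefficient to Pochhammer form. Using $(-n)_{k}=(-1)^{k}n!/(n-k)!$, $(n+1)_{k}=(n+k)!/n!$ and $(1)_{k}=k!$, one gets $\dfrac{(-1)^{k}(n+k)!}{(k!)^{2}(n-k)!}=\dfrac{(-n)_{k}\,(n+1)_{k}}{(1)_{k}\,k!}$, hence
\[
Q_{n}(y)=\sum_{k=0}^{\infty}\frac{(-n)_{k}\,(n+1)_{k}}{(1)_{k}\,k!}\Bigl(\frac{1-y}{2}\Bigr)^{k}={}_{2}F_{1}\!\Bigl(-n,\,n+1;\,1;\,\frac{1-y}{2}\Bigr),
\]
the series terminating at $k=n$ because of the factor $(-n)_{k}$; putting $y=x^{\alpha}$ yields \eqref{sec4, eq1}.

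I expect the only delicate point to be the bookkeeping around the parity relation — in particular reading the passage $x^{\alpha}\mapsto -x^{\alpha}$ as an identity between polynomials in $y=x^{\alpha}$ rather than as an evaluation at a possibly undefined point. An alternative, self-contained route avoids \eqref{sec3, eq4} altogether: expand $\bigl(\tfrac{x^{\alpha}+1}{2}\bigr)^{k}=\bigl(1-\tfrac{1-x^{\alpha}}{2}\bigr)^{k}$ in \eqref{sec3, eq8} by the binomial theorem, interchange the two summations via Lemma \ref{Lem1new}, and evaluate the resulting inner sum $\sum_{i}(-1)^{i}\binom{m}{i}\binom{a+i}{n}=(-1)^{m}\binom{a}{n-m}$ by the Chu--Vandermonde (finite-difference) identity; there the main obstacle is recognizing and justifying that combinatorial identity.
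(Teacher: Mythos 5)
Your proof is correct, but it takes a different route from the paper. The paper proves \eqref{sec4, eq1} by going back to the generating function and re-expanding it with the factorization $\left(1-2x^{\alpha}t^{\alpha}+t^{2\alpha}\right)^{-1/2}=\left(1-t^{\alpha}\right)^{-1}\left(1+\tfrac{2t^{\alpha}(x^{\alpha}-1)}{(1-t^{\alpha})^{2}}\right)^{-1/2}$, rearranging the resulting double series via Lemma \ref{Lem1new} to obtain the intermediate expansion $P_{\alpha n}(x)=\sum_{k}\tfrac{(n+k)!}{(k!)^{2}(n-k)!}\left(\tfrac{x^{\alpha}-1}{2}\right)^{k}$ (their \eqref{sec4, eq2}), and then performing the same Pochhammer conversion you use at the end. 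You instead bypass any new generating-function work: you extract the parity $Q_{n}(-y)=(-1)^{n}Q_{n}(y)$ in $y=x^{\alpha}$ from \eqref{sec3, eq4}, apply it to the already established expansion \eqref{sec3, eq8} in powers of $\tfrac{x^{\alpha}+1}{2}$ to flip it into an expansion in powers of $\tfrac{1-x^{\alpha}}{2}$, and finish with the identical Pochhammer identity $\tfrac{(-n)_{k}(n+1)_{k}}{(1)_{k}k!}=\tfrac{(-1)^{k}(n+k)!}{(k!)^{2}(n-k)!}$. Your argument is shorter and purely algebraic (a polynomial identity in $y$, so the substitution $y\mapsto-y$ is unobjectionable, as you note), at the cost of leaning on \eqref{sec3, eq8} — which, as printed in the paper, has a typo ($(n+k)$ should be $(n+k)!$, as its own derivation shows) that you implicitly and correctly repair; the paper's route is self-contained from the generating function and produces the $(x^{\alpha}-1)/2$ expansion as a by-product of independent interest. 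Your sketched Chu--Vandermonde alternative is not needed given the parity argument, which is fully adequate as written.
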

\begin{proof}
 First we have 
\begin{eqnarray*}
\begin{aligned}
 \left( 1-2x^{\alpha }t^{\alpha }+t^{2\alpha }\right) ^{\frac{-1}{2}
}&=\left( 1-t^{\alpha
}\right) ^{-1}\left( 1+\frac{2t^{\alpha }\left( x^{\alpha }-1\right) }{
\left( 1-t^{\alpha }\right) ^{2}}\right) ^{\frac{-1}{2}}\\
&=\left( 1-t^{\alpha
}\right) ^{-1}~_{1}F_{0}\left( \frac{1}{2};-;\frac{2t^{\alpha }\left(
x^{\alpha }-1\right) }{\left( 1-t^{\alpha }\right) ^{2}}\right)\\
&=\sum\limits_{k=0}^{\infty }\frac{\left( \frac{1}{2}\right) _{k}}{~k!}\frac{
2^{k}t^{\alpha k}\left( x^{\alpha }-1\right) ^{k}}{\left( 1-t^{\alpha
}\right) ^{2k+1}}\\
&=\sum\limits_{k=0}^{\infty }\frac{\left( \frac{1}{2}\right)
_{k}2^{k}t^{\alpha k}\left( x^{\alpha }-1\right) ^{k}}{~k!}~_{1}F_{0}\left(2k+1;-;t^{\alpha }\right)\\
&=\sum\limits_{n=0}^{\infty }\sum\limits_{k=0}^{\infty }\frac{~\
~\left( n+2k\right) !~\left( x^{\alpha }-1\right) ^{k}}{2^{k}~\left(k!\right) ^{2}n!}t^{\alpha \left( n+k\right) }
\end{aligned}
\end{eqnarray*}
Relying to lemma \ref{Lem1new}, one obtains  
\begin{equation}\label{Eq 4.2}
\left( 1-2x^{\alpha }t^{\alpha }+t^{2\alpha }\right) ^{\frac{-1}{2}
}=\sum\limits_{n=0}^{\infty } \sum\limits_{k=0}^{n }\frac{
\left( n+k\right) !~\left( x^{\alpha }-1\right) ^{k}}{2^{k}~\left(
k!\right) ^{2}\left( n-k\right) !}t^{\alpha n}
\end{equation}
Note that the right hand side equal zero if $k>n$, and then \eqref{Eq 4.2} becomes

\begin{equation*}
\left( 1-2x^{\alpha }t^{\alpha }+t^{2\alpha }\right) ^{\frac{-1}{2}
}=\sum\limits_{n=0}^{\infty } \sum\limits_{k=0}^{\infty }\frac{
\left( n+k\right) !~\left( x^{\alpha }-1\right) ^{k}}{2^{k}~\left(
k!\right) ^{2}\left( n-k\right) !}t^{\alpha n}
\end{equation*}
Owing to \eqref{sec3, eq1} we have
\begin{equation}\label{sec4, eq2}
P_{\alpha n}\left( x\right) =\sum\limits_{k=0}^{\infty }\frac{~\
~\left( n+k\right) !~}{\left( k!\right) ^{2}\left( n-k\right) !}\left( \frac{
x^{\alpha }-1}{2}\right) ^{k}
\end{equation}
Since $\frac{\left( -n\right) _{k}\left( n+1\right) _{k}}{\left( 1\right) _{k}}
=\frac{\left( -1\right) ^{k}\left( n+k\right) !}{\left( n-k\right) !k!},$ then \eqref{sec4, eq2} gives 
\begin{eqnarray*}
P_{\alpha n}\left( x\right) &=\sum\limits_{k=0}^{\infty
}\frac{\left( -n\right) _{k}\left( n+1\right) _{k}}{\left( 1\right) _{k}~k!}
\left( \frac{1-x^{\alpha }}{2}\right) ^{k}
=~_{2}F_{1}\left( -n,n+1;1;\frac{
1-x^{\alpha }}{2}\right) 
\end{eqnarray*}
as required.
\end{proof}

\begin{thm}
For $\alpha \in (0,1],$  the CFLPs, $P_{\alpha n}\left(x\right) $ can be formulated by means of Gauss hypergeometric function as 
\begin{equation}\label{sec4, eq3}
P_{\alpha n}\left( x\right) =\frac{\left( \frac{1}{2}\right) _{n}\left(2x^{\alpha }\right) 
^{n}}{n!}~_{2}F_{1}\left( \frac{-1}{2}n,\frac{-1}{2}n+\frac{1}{2};\frac{1}{2}-n;\frac{1}{x^{2\alpha }}\right). 
\end{equation}
\end{thm}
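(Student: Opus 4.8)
The plan is to read \eqref{sec4, eq3} off directly from a power--series form of $P_{\alpha n}(x)$ that is already in hand. In the proof of Theorem~\ref{sec3, thm1} we obtained the terminating expansion
\begin{equation*}
P_{\alpha n}(x)=\sum_{k=0}^{\lfloor n/2\rfloor}\frac{(-1)^{k}(2n-2k)!}{2^{n}(n-k)!\,(n-2k)!\,k!}\,x^{\alpha(n-2k)}.
\end{equation*}
Denoting the $k$--th coefficient by $c_{k}$ and factoring $x^{\alpha n}$ out of every term, this becomes $P_{\alpha n}(x)=x^{\alpha n}\sum_{k}c_{k}\,(x^{-2\alpha})^{k}$, so it suffices to identify the right-hand side of \eqref{sec4, eq3} with this series in the variable $1/x^{2\alpha}$.

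First I would pin down the prefactor. Using $(2n)!=4^{n}\,n!\,(\tfrac12)_{n}$ one gets $c_{0}=\dfrac{(2n)!}{2^{n}(n!)^{2}}=\dfrac{2^{n}(\tfrac12)_{n}}{n!}$, hence $c_{0}\,x^{\alpha n}=\dfrac{(\tfrac12)_{n}(2x^{\alpha})^{n}}{n!}$, exactly the factor in front of the hypergeometric function. Next I would compute the ratio $c_{k}/c_{0}$, rewriting each factorial quotient via Pochhammer symbols: $\dfrac{n!}{(n-k)!}=(-1)^{k}(-n)_{k}$, $\dfrac{n!}{(n-2k)!}=(-n)_{2k}$ and $\dfrac{(2n-2k)!}{(2n)!}=\dfrac1{(-2n)_{2k}}$. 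A short cancellation, in which the two signs $(-1)^{k}$ combine to $1$, yields
\begin{equation*}
\frac{c_{k}}{c_{0}}=\frac{(-n)_{2k}\,(-n)_{k}}{(-2n)_{2k}\,k!}.
\end{equation*}

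The heart of the argument is then the duplication identity $(a)_{2k}=4^{k}\bigl(\tfrac a2\bigr)_{k}\bigl(\tfrac{a+1}2\bigr)_{k}$, applied with $a=-n$ in the numerator and $a=-2n$ in the denominator: the factors $4^{k}$ and $(-n)_{k}$ cancel and leave $\dfrac{c_{k}}{c_{0}}=\dfrac{(-\tfrac n2)_{k}\,(\tfrac{1-n}2)_{k}}{(\tfrac12-n)_{k}\,k!}$, which is precisely the coefficient of $(1/x^{2\alpha})^{k}$ in ${}_2F_1\bigl(-\tfrac n2,-\tfrac n2+\tfrac12;\tfrac12-n;\tfrac1{x^{2\alpha}}\bigr)$; multiplying by $c_{0}x^{\alpha n}$ and summing over $k$ gives \eqref{sec4, eq3}. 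The Pochhammer bookkeeping is routine; the point that needs a little care is matching the two summation ranges. On the left the sum stops at $k=\lfloor n/2\rfloor$ because of the factor $1/(n-2k)!$, while on the right the series terminates because one numerator parameter is a non-positive integer --- $-\tfrac n2$ when $n$ is even, $\tfrac{1-n}2$ when $n$ is odd --- and I would verify that in either parity the last surviving term is again $k=\lfloor n/2\rfloor$, so the two finite sums coincide term by term. I would also note that $(\tfrac12-n)_{k}$ never vanishes, all of its factors being half-integers, so no spurious pole arises in the denominator, and that the identity is understood for $x\neq0$.
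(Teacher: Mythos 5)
Your proposal is correct and follows essentially the same route as the paper: both start from the explicit finite sum $P_{\alpha n}(x)=\sum_{k=0}^{\lfloor n/2\rfloor}\frac{(-1)^{k}(2n-2k)!}{2^{n}k!(n-k)!(n-2k)!}x^{\alpha(n-2k)}$, convert the coefficients to Pochhammer form, extend the sum to infinity using the vanishing of the numerator parameters, and identify the resulting series as the stated ${}_2F_1$ via the duplication identity. The only difference is bookkeeping --- you normalize by $c_{0}$ and apply duplication twice (with $a=-n$ and $a=-2n$), whereas the paper quotes the combined identity $\frac{(\frac12)_{n}(-n)_{2k}}{(\frac12-n)_{k}}=\frac{(-1)^{k}2^{2k}n!\,(2n-2k)!}{2^{2n}(n-k)!(n-2k)!}$ and then uses duplication once; your version also makes explicit the termination range and the non-vanishing of $(\tfrac12-n)_{k}$, which the paper leaves implicit.
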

\begin{proof}
In view of \eqref{sec2, eq3}, we have
\begin{equation}\label{sec4, eq4}
P_{\alpha n}\left( x\right) =\sum\limits_{k=0}^{\left\lfloor \frac{n}{2}\right\rfloor }\frac{\left( -1\right) ^{k}\left( 2n-2k\right) !}{2^{n}k!\left( n-k\right)!\left( n-2k\right) !}x^{\alpha \left( n-2k\right) }
\end{equation}
Since 
\begin{equation*}
\frac{\left( \frac{1}{2}\right) _{n}\left( -n\right)_{2k}}{\left( \frac{1}{2}-n\right) _{k}}
=\frac{\left( -1\right)^{k}2^{2k}n!\left( 2n-2\right)!}{2^{2n}\left( n-k\right)!\left(n-2k\right)!},
\end{equation*}
then it follows that
\begin{equation}\label{sec4, eq5}
\begin{split}
P_{\alpha n}\left( x\right) &=\sum\limits_{k=0}^{\left\lfloor \frac{n}{2}\right\rfloor }\frac{\left( \frac{1}{2}\right) _{n}
\left( -n\right) _{2k}}{n!k!\left( \frac{1}{2}-n\right) _{k}}\left( 2x^{\alpha }\right) ^{\left(n-2k\right) }\\
&=\frac{\left( \frac{1}{2}\right) _{n}2^{n}x^{\alpha n}}{n!}
\sum\limits_{k=0}^{\left\lfloor \frac{n}{2}\right\rfloor }\frac{\left(-n\right) _{2k}}{k!\left( \frac{1}{2}-n\right) _{k}}
\left( 2x^{\alpha}\right) ^{-2k}
\end{split}
\end{equation}
Note that $\left( -n\right) _{2k}=0$ if $k>\left\lfloor \frac{n}{2}\right\rfloor$, and then \eqref{sec4, eq5} becomes
\begin{equation}\label{sec4, eq6}
P_{\alpha n}\left( x\right) =\frac{\left( \frac{1}{2}\right)
_{n}2^{n}x^{\alpha n}}{n!}\sum\limits_{k=0}^{\infty }\frac{\left( -n\right)
_{2k}}{k!\left( \frac{1}{2}-n\right) _{k}}\left( 2x^{\alpha }\right) ^{-2k}
\end{equation}
Since $\left( -n\right) _{2k}=2^{2k}$ $\left( \frac{-1}{2}n\right) _{k} \left(\frac{-1}{2}n+\frac{1}{2}\right) _{k}$ holds for Pochhammer symbol, then we can deduce

\begin{equation*}
\begin{split}
P_{\alpha n}\left( x\right) &=\frac{\left( \frac{1}{2}\right)
_{n}2^{n}x^{\alpha n}}{n!}\sum\limits_{k=0}^{\infty }\frac{\left( \frac{-1}{2}n\right) _{k}\left( \frac{-1}{2}n+\frac{1}{2}\right) _{k}}{k!\left( \frac{1}{2}-n\right) _{k}}\left( x^{-2\alpha }\right) ^{k}\\
&=\frac{\left( \frac{1}{2}\right) _{n}\left( 2x^{\alpha }\right) ^{n}}{n!}~_{2}F_{1}\left( \frac{-1}{2}n,\frac{-1}{2}n+\frac{1}{2};\frac{1}{2}-n;\frac{1}{x^{2\alpha }}\right), 
\end{split}
\end{equation*}
which finishes the proof.
\end{proof}

\begin{thm}
For $\alpha \in (0,1],$ we can write the CFLPs, $P_{\alpha n}\left( x\right) $ by means of hypergeometric functions as:
\begin{equation}\label{sec4, eq7}
P_{\alpha n}\left( x\right) =x^{\alpha n}~_{2}F_{1}\left( \frac{-1}{2}n,\frac{-1}{2}n+\frac{1}{2};1;\frac{x^{2\alpha }-1}{x^{2\alpha }}\right) 
\end{equation}
\end{thm}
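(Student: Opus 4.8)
The plan is to obtain \eqref{sec4, eq7} directly from the closed form \eqref{sec3, eq4}, which already carries the factor $\left(x^{2\alpha}-1\right)^{k}$; starting instead from the hypergeometric representations \eqref{sec4, eq1} or \eqref{sec4, eq3} would force us to invoke a quadratic transformation of ${}_{2}F_{1}$, which is unnecessary here. First I would extract the full power $x^{\alpha n}$ from \eqref{sec3, eq4}. Since $x^{\alpha\left(n-2k\right)}\left(x^{2\alpha}-1\right)^{k}=x^{\alpha n}\left(\frac{x^{2\alpha}-1}{x^{2\alpha}}\right)^{k}$, the identity \eqref{sec3, eq4} rewrites as
\begin{equation*}
P_{\alpha n}\left(x\right)=x^{\alpha n}\sum_{k=0}^{\left\lfloor \frac{n}{2}\right\rfloor}\frac{\left(\frac{1}{2}\right)_{k}n!}{\left(2k\right)!\,k!\,\left(n-2k\right)!}\left(\frac{x^{2\alpha}-1}{x^{2\alpha}}\right)^{k},
\end{equation*}
so the argument of the sought hypergeometric function already appears isolated.

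The second step is to recast the coefficient in Pochhammer form. Using the duplication relation $\left(2k\right)!=2^{2k}k!\left(\frac{1}{2}\right)_{k}$ together with $\frac{n!}{\left(n-2k\right)!}=\left(-n\right)_{2k}=2^{2k}\left(\frac{-1}{2}n\right)_{k}\left(\frac{-1}{2}n+\frac{1}{2}\right)_{k}$ — the same Pochhammer identity already used in the proof of \eqref{sec4, eq3} — the powers of $2$ cancel and one gets
\begin{equation*}
\frac{\left(\frac{1}{2}\right)_{k}n!}{\left(2k\right)!\,k!\,\left(n-2k\right)!}=\frac{\left(\frac{-1}{2}n\right)_{k}\left(\frac{-1}{2}n+\frac{1}{2}\right)_{k}}{\left(1\right)_{k}\,k!}.
\end{equation*}
Finally, because $\left(n-2k\right)!$ in the denominator kills every term with $k>\left\lfloor \frac{n}{2}\right\rfloor$ — equivalently, the factor $\left(\frac{-1}{2}n\right)_{k}$ when $n$ is even and $\left(\frac{-1}{2}n+\frac{1}{2}\right)_{k}$ when $n$ is odd terminates the series exactly at $k=\left\lfloor \frac{n}{2}\right\rfloor$ — the finite sum may be extended to $k=\infty$ without changing its value, which turns it into ${}_{2}F_{1}\!\left(\frac{-1}{2}n,\frac{-1}{2}n+\frac{1}{2};1;\frac{x^{2\alpha}-1}{x^{2\alpha}}\right)$ and yields \eqref{sec4, eq7}.

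I do not expect any genuine obstacle: the only delicate point is the bookkeeping with Pochhammer symbols, in particular confirming $\left(-n\right)_{2k}=2^{2k}\left(\frac{-1}{2}n\right)_{k}\left(\frac{-1}{2}n+\frac{1}{2}\right)_{k}$, which is the $m=2$ case of the Gauss multiplication formula and is in any event already available from the earlier proof. Everything else reduces to elementary algebra on factorials.
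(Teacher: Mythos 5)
Your proposal is correct and follows essentially the same route as the paper: start from \eqref{sec3, eq4}, pull out $x^{\alpha n}$, use $(2k)!=2^{2k}k!\left(\tfrac{1}{2}\right)_{k}$ and $\tfrac{n!}{(n-2k)!}=(-n)_{2k}=2^{2k}\left(\tfrac{-n}{2}\right)_{k}\left(\tfrac{-n}{2}+\tfrac{1}{2}\right)_{k}$, and extend the terminating sum to infinity before identifying the ${}_{2}F_{1}$. The only difference is cosmetic ordering of these steps, so no further comment is needed.
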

\begin{proof}
Relation \eqref{sec3, eq4} gives
\begin{equation}\label{sec4, eq8}
\begin{split}
P_{\alpha n}\left( x\right) &=\sum\limits_{k=0}^{\left\lfloor \frac{n}{2}
\right\rfloor }\frac{n!}{2^{2k}\left( k!\right) ^{2}\left( n-2k\right) !}
x^{\alpha \left( n-2k\right) }.\left( x^{2\alpha }-1\right)
^{k}\\
&=\sum\limits_{k=0}^{\left\lfloor \frac{n}{2}\right\rfloor }\frac{\left(
-n\right) _{2k}x^{\alpha n}}{2^{2k}\left( k!\right) ^{2}}.\left( \frac{
x^{2\alpha }-1}{x^{2\alpha }}\right) ^{k}
\end{split}
\end{equation} 
But $\left( -n\right) _{2k}=0$ if $k>\left\lfloor \frac{n}{2}\right\rfloor$,  then using \eqref{sec4, eq8} it follows that
\begin{equation}\label{sec4, eq9}
P_{\alpha n}\left( x\right) =\sum\limits_{k=0}^{\infty }\frac{\left(
-n\right) _{2k}x^{\alpha n}}{2^{2k}\left( k!\right) ^{2}}.\left( \frac{
x^{2\alpha }-1}{x^{2\alpha }}\right) ^{k}
\end{equation}
Since $\left( -n\right) _{2k}=2^{2k}$ $\left(\frac{-1}{2}n\right) _{k} \left( \frac{-1}{2}n+\frac{1}{2}\right) _{k},$ then
\begin{equation*}
\begin{split}
P_{\alpha n}\left( x\right) &=x^{\alpha n}\sum\limits_{k=0}^{\infty }\frac{
\left( \frac{-1}{2}n\right) _{k}\left( \frac{-1}{2}n+\frac{1}{2}\right) _{k}
}{\left( k!\right) ^{2}}.\left( \frac{x^{2\alpha }-1}{x^{2\alpha }}\right)
^{k}\\
&=x^{\alpha n}~_{2}F_{1}\left( \frac{-1}{2}n,\frac{-1}{2}n+\frac{1}{2};1;
\frac{x^{2\alpha }-1}{x^{2\alpha }}\right). 
\end{split}
\end{equation*}
The relation \eqref{sec4, eq7} is therefore attained as required. 
\end{proof}


\begin{thm} (Laplace's first integral form of CFLPs) \newline
For $\alpha \in (0,1],$  the CFLPs, $P_{\alpha n}\left( x\right) $ can be
represented via the integral form as
\begin{equation}\label{sec5, eq1}
P_{\alpha n}\left( x\right) =\frac{1}{\pi }\int\limits_{0}^{\pi }\left(x^{\alpha }+\sqrt{x^{2\alpha }-1}\cos \varphi \right) ^{n}d\varphi. 
\end{equation}
\end{thm}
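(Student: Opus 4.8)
The plan is to reduce Laplace's first integral formula to the hypergeometric representation \eqref{sec4, eq7} (equivalently \eqref{sec3, eq4}) that has already been established, by expanding the integrand with the binomial theorem and integrating term by term. First I would write $\left(x^{\alpha}+\sqrt{x^{2\alpha}-1}\,\cos\varphi\right)^{n}=\sum_{j=0}^{n}\binom{n}{j}x^{\alpha(n-j)}\left(x^{2\alpha}-1\right)^{j/2}\cos^{j}\varphi$ and integrate against $\frac{1}{\pi}d\varphi$ over $[0,\pi]$. The classical Wallis integral then does the work: $\frac{1}{\pi}\int_{0}^{\pi}\cos^{j}\varphi\,d\varphi=0$ when $j$ is odd, and equals $\binom{j}{j/2}2^{-j}$ (i.e. $\frac{(j-1)!!}{j!!}$) when $j$ is even. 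Hence only the even indices $j=2k$, with $0\le k\le\lfloor n/2\rfloor$, survive, and the half-integer power $\left(x^{2\alpha}-1\right)^{j/2}$ becomes the integer power $\left(x^{2\alpha}-1\right)^{k}$, so the apparent branch ambiguity disappears.

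Carrying this out, the surviving sum is
\begin{equation*}
\frac{1}{\pi}\int_{0}^{\pi}\left(x^{\alpha}+\sqrt{x^{2\alpha}-1}\,\cos\varphi\right)^{n}d\varphi
=\sum_{k=0}^{\left\lfloor \frac{n}{2}\right\rfloor}\binom{n}{2k}\frac{1}{2^{2k}}\binom{2k}{k}x^{\alpha(n-2k)}\left(x^{2\alpha}-1\right)^{k}.
\end{equation*}
Now I would simplify the coefficient: $\binom{n}{2k}\binom{2k}{k}2^{-2k}=\dfrac{n!}{(2k)!\,(n-2k)!}\cdot\dfrac{(2k)!}{(k!)^{2}}\cdot 2^{-2k}=\dfrac{n!}{2^{2k}(k!)^{2}(n-2k)!}$, which is exactly the coefficient appearing in the first line of \eqref{sec4, eq8}. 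Therefore the right-hand side coincides term for term with the expansion of $P_{\alpha n}(x)$ in \eqref{sec3, eq4} (after using $\left(\tfrac12\right)_{k}=\dfrac{(2k)!}{2^{2k}k!}$ to match notations), and the identity \eqref{sec5, eq1} follows.

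The only genuine obstacle is a bookkeeping/analytic one rather than a structural one: justifying the termwise integration (trivial here, since the sum over $j$ is finite) and, more delicately, pinning down the sign/branch of $\sqrt{x^{2\alpha}-1}$ so that the formula makes sense for $x^{\alpha}\in(-1,1)$ where the radicand is negative. In that regime $\sqrt{x^{2\alpha}-1}\cos\varphi$ is purely imaginary, but the odd powers cancel in the integral, so the result is real and the choice of branch is immaterial — this is the point worth a sentence of comment in the writeup. Matching the Pochhammer-symbol normalization of \eqref{sec3, eq4} against the factorials produced by the Wallis formula is the one place to be careful, but it is routine once the $\cos^{j}$ integral is in hand.
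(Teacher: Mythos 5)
Your proposal is correct and is essentially the paper's own argument run in the opposite direction: the paper starts from the expansion \eqref{sec3, eq4}, writes $\left(\tfrac{1}{2}\right)_{k}/k!=\tfrac{1}{\pi}\int_{0}^{\pi}\cos^{2k}\varphi\,d\varphi$, and then reassembles the binomial sum inside the integral using the vanishing of the odd Wallis integrals, whereas you expand the integrand binomially and integrate term by term with the same Wallis values. The ingredients (binomial theorem, $\int_{0}^{\pi}\cos^{m}\varphi\,d\varphi$, and formula \eqref{sec3, eq4}) are identical, so this is the same proof up to direction, with your remark on the branch of $\sqrt{x^{2\alpha}-1}$ a welcome extra observation.
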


\begin{proof}
The integral can be recognized as the usual Beta function $\beta (x,y)$, which has the following property 
\begin{eqnarray*}
\frac{\left( \frac{1}{2}\right) _{k}}{k!}=\frac{1}{\pi }\beta \left( \frac{1}{2},k+\frac{1}{2}\right)
=\frac{1}{\pi }.\int\limits_{0}^{\pi }\cos^{2k}\varphi ~d\varphi 
\end{eqnarray*}
Therefore, in view of \eqref{sec3, eq4}, one easily gets 
\begin{equation}\label{sec5, eq2}
P_{\alpha n}\left( x\right) =\frac{1}{\pi }\sum\limits_{k=0}^{\left\lfloor
\frac{n}{2}\right\rfloor }\frac{~n!\ }{\left( 2k\right) !\left( n-2k\right)!}
x^{\alpha \left( n-2k\right) }\left( x^{2\alpha }-1\right)
^{k}.\int\limits_{0}^{\pi }\cos ^{2k}\varphi ~d\varphi 
\end{equation}
Note that for $m$ is odd we see $\int\limits_{0}^{\pi }\cos ^{m}\varphi ~d\varphi =0,$  and then we obtain
\begin{eqnarray*}
\begin{aligned}
P_{\alpha n}\left( x\right) &=\frac{1}{\pi }\int\limits_{0}^{\pi
}\sum\limits_{k=0}^{n}\frac{~n!\ }{\left( k\right) !\left( n-k\right)!}
x^{\alpha \left( n-k\right) }\left( x^{2\alpha }-1\right) ^{\frac{k}{2}}.\cos ^{k}\varphi ~d\varphi \\
&=\frac{1}{\pi }\int\limits_{0}^{\pi }\left( x^{\alpha }+
\sqrt{x^{2\alpha }-1}\cos \varphi \right) ^{n}d\varphi 
\end{aligned}
\end{eqnarray*}
as desired.
\end{proof}

\section{The pure recurrence relation and differential recurrence relations}\label{Sec5}
In this section, we first give the pure recurrence relation which enable us to obtain  some of differential recurrence relations. We start with the following result.
\begin{thm}
The conformable fractional Legendre function $P_{\alpha n}\left( x\right) $ satisfies the following pure recurrence relation
\begin{equation}\label{sec6, eq1}
P_{\alpha \left( n+1\right) }\left( x\right) =\frac{\left( 2n+1\right) }{
\left( n+1\right) }~x^{\alpha }~P_{\alpha n}\left( x\right) -\frac{n}{n+1}
P_{\alpha \left( n-1\right) }\left( x\right) 
\end{equation}
\end{thm}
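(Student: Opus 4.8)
The plan is to mimic the classical proof of Bonnet's recursion formula for Legendre polynomials, carried through the generating function $g(x,t)=(1-2x^{\alpha}t^{\alpha}+t^{2\alpha})^{-1/2}$ established in Theorem \ref{sec3, thm1}. The key observation is that if we set $u=t^{\alpha}$, then $g$ is precisely the classical Legendre generating function in the variable $u$ with parameter $x^{\alpha}$, so every purely formal manipulation of power series in $u$ is available to us verbatim; we then translate the resulting identity between coefficients of $u^{n}$ back into an identity between the $P_{\alpha n}(x)$.

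Concretely, I would start from
\begin{equation*}
g(x,t)=\frac{1}{\sqrt{1-2x^{\alpha}t^{\alpha}+t^{2\alpha}}}=\sum_{n=0}^{\infty}P_{\alpha n}(x)\,t^{\alpha n},
\end{equation*}
and differentiate formally with respect to $t$. Rather than differentiating in $t$ directly (which introduces the conformable factor $x^{1-\alpha}$ and is awkward), I would treat $u=t^{\alpha}$ as the working variable, i.e. apply $\partial/\partial u$, obtaining
\begin{equation*}
\frac{\partial g}{\partial u}=\frac{x^{\alpha}-u}{\bigl(1-2x^{\alpha}u+u^{2}\bigr)^{3/2}}=\frac{x^{\alpha}-u}{1-2x^{\alpha}u+u^{2}}\,g(x,t).
\end{equation*}
Cross-multiplying gives the single functional identity $(1-2x^{\alpha}u+u^{2})\,\partial_u g=(x^{\alpha}-u)\,g$. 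Now substitute the series $g=\sum_n P_{\alpha n}(x)u^{n}$ and $\partial_u g=\sum_n nP_{\alpha n}(x)u^{n-1}$, expand both sides, and collect the coefficient of $u^{n}$. The left side contributes $(n+1)P_{\alpha(n+1)}(x)-2x^{\alpha}nP_{\alpha n}(x)+(n-1)P_{\alpha(n-1)}(x)$ and the right side contributes $x^{\alpha}P_{\alpha n}(x)-P_{\alpha(n-1)}(x)$; equating and solving for $P_{\alpha(n+1)}(x)$ yields exactly \eqref{sec6, eq1}. The low-index cases $n=0,1$ are checked directly against the explicit first terms $P_0=1$, $P_{\alpha}=x^{\alpha}$, $P_{2\alpha}=\tfrac12(3x^{2\alpha}-1)$ listed after \eqref{sec2, eq3}.

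The one genuine point requiring care — the main obstacle, such as it is — is justifying the passage from the conformable setting to the ordinary power-series calculus in $u=t^{\alpha}$: one must be sure that coefficient-comparison is legitimate, which it is because \eqref{sec3, eq1} is a bona fide convergent expansion in powers of $t^{\alpha}$ for $|t^{\alpha}|$ small (the convergence results of Section \ref{Sec3}) and because each $P_{\alpha n}(x)$ is a polynomial in $x^{\alpha}$, so termwise differentiation in $u$ and rearrangement are valid on the radius of convergence. An alternative, fully algebraic route that avoids even this mild analytic remark is to start from the closed form \eqref{sec3, eq4} (or equivalently \eqref{sec2, eq3}), substitute into the claimed relation \eqref{sec6, eq1}, and verify the polynomial identity in $x^{\alpha}$ directly by matching coefficients of $x^{2\alpha j}$ using the Pochhammer/binomial identities already exploited in Section \ref{Sec4} together with Lemmas \ref{Lem1new}--\ref{Lem3new}; I would mention this as a remark but carry out the generating-function proof as the main argument since it is shorter and more transparent.
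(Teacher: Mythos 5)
Your proposal is correct and is essentially the paper's own argument: the paper applies the conformable operator $D^{\alpha}$ in $t$ to the generating function \eqref{sec3, eq1}, which on series in $t^{\alpha}$ is just $\alpha\,\partial_u$ with $u=t^{\alpha}$ (the factor $\alpha$ cancels), then cross-multiplies by $1-2x^{\alpha}t^{\alpha}+t^{2\alpha}$ and compares coefficients of $t^{\alpha n}$, yielding exactly the identity $(n+1)P_{\alpha(n+1)}=(2n+1)x^{\alpha}P_{\alpha n}-nP_{\alpha(n-1)}$ that you obtain. Your substitution $u=t^{\alpha}$ is only a notational repackaging of the same computation, so no substantive difference remains.
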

\begin{proof}
Acting by fractional differential operator $D^{\alpha }$ on \eqref{sec3, eq1} with respect to $t$, we get 
\begin{equation*}
\frac{-1}{2}~\left( 1-2x^{\alpha }t^{\alpha }+t^{2\alpha }\right) ^{\frac{-3}{2}}
\left( -2\alpha x^{\alpha }t^{\alpha -1}+2\alpha t^{2\alpha -1}\right).t^{1-\alpha }=\sum\limits_{n=0}^
{\infty }n\alpha P_{\alpha n}\left(x\right)  t^{\alpha \left( n-1\right)}
\end{equation*}
Note that
\begin{equation*}
\alpha \left( x^{\alpha }-t^{\alpha }\right) \left( 1-2x^{\alpha }t^{\alpha}+t^{2\alpha }\right) 
^{\frac{-3}{2}}=\alpha \sum\limits_{n=0}^{\infty}nP_{\alpha n}\left( x\right) t^{\alpha\left( n-1\right)}.
\end{equation*} 
Thus
\begin{equation}\label{(5.2)}
\left( x^{\alpha }-t^{\alpha }\right) \left( 1-2x^{\alpha }t^{\alpha}
+t^{2\alpha }\right) ^{\frac{-1}{2}}=\left( 1-2x^{\alpha }t^{\alpha}
+t^{2\alpha }\right) \sum\limits_{n=0}^{\infty }nP_{\alpha n}\left(x\right) t^{\alpha \left( n-1\right)}
\end{equation}
Inserting \eqref{(5.2)} in  \eqref{sec3, eq1} we obtain 
\begin{equation*}
\left( x^{\alpha }-t^{\alpha }\right) \sum\limits_{n=0}^{\infty }P_{\alpha n}
\left( x\right) \ t^{\alpha n}=\left( 1-2x^{\alpha }t^{\alpha }+t^{2\alpha}\right) 
\sum\limits_{n=0}^{\infty }nP_{\alpha n}\left( x\right) t^{\alpha \left( n-1\right)}
\end{equation*}
Then we arrive to
\begin{eqnarray}\label{May}
\begin{aligned}
\sum\limits_{n=0}^{\infty }x^{\alpha }P_{\alpha n}\left( x\right) \
t^{\alpha n}-\sum\limits_{n=0}^{\infty }P_{\alpha n}\left( x\right) \
t^{ \alpha \left( n+1\right)}=&\sum\limits_{n=0}^{\infty }nP_{\alpha n}\left(
x\right) t^{ \alpha \left( n-1\right)}-\sum\limits_{n=0}^{\infty}2nx^{\alpha }P_{\alpha n}\left( x\right) t^{\alpha n}\\
&+\sum\limits_{n=0}^{\infty}nP_{\alpha n}\left( x\right) t^{\alpha \left( n+1\right) }
\end{aligned}
\end{eqnarray}
Comparing the corresponding coefficients of $t^{\alpha n}$ in both sides of \eqref{May} yields
$$x^{\alpha }P_{\alpha n}\left( x\right) -P_{\alpha \left( n-1\right)}\left(
x\right) =\left( n+1\right) P_{\alpha \left( n+1\right)}\left( x\right)
-2nx^{\alpha }P_{n\alpha }\left( x\right) +\left( n-1\right) P_{\alpha \left(
n-1\right)}\left( x\right). $$
Therefore
\begin{equation*}
\left( n+1\right) P_{\alpha \left( n+1\right)}\left( x\right) =\left(2n+1\right) x^{\alpha }P_{\alpha n}\left( x\right) -nP_{\alpha \left( n-1\right)}\left( x\right),
\end{equation*}
which is the required relation.
\end{proof}

Next, concerning differential recurrence relations we deduce the following.
\begin{thm}
The conformable fractional Legendre function $P_{\alpha n}\left( x\right) $
satisfies the following differential recurrence relations:
\begin{align}
&D^{\alpha }P_{\alpha \left( n+1\right) }\left( x\right) -\left( n+1\right)
\alpha P_{\alpha n}\left( x\right) -x^{\alpha }D^{\alpha }P_{\alpha n}\left(
x\right) =0\label{sec6, eq2}\\
&x^{\alpha }D^{\alpha }P_{\alpha n}\left( x\right) -n\alpha P_{\alpha
n}\left( x\right) -D^{\alpha }P_{\alpha \left( n-1\right) }\left( x\right) =0\label{sec6, eq3}\\
&D^{\alpha }P_{\alpha \left( n+1\right) }\left( x\right) -D^{\alpha
}P_{\alpha \left( n-1\right) }\left( x\right) =\left( 2n+1\right) \alpha
P_{\alpha n}\left( x\right) \label{sec6, eq4}
\end{align}
\end{thm}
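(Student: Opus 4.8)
\noindent\emph{Proof plan.} The plan is to obtain all three identities from the generating relation \eqref{sec3, eq1} by applying the conformable derivative $D^{\alpha }$ once in the variable $x$ (with $t$ held fixed) and once in $t$, and then matching the coefficients of $t^{\alpha n}$, in the same spirit as the proof of the pure recurrence relation \eqref{sec6, eq1}. Write $W=1-2x^{\alpha }t^{\alpha }+t^{2\alpha }$, so that \eqref{sec3, eq1} reads $W^{-1/2}=\sum_{n\geq 0}P_{\alpha n}(x)t^{\alpha n}$. Since $W$ depends on $x$ only through $x^{\alpha }$ and $D^{\alpha }x^{\alpha }=\alpha$, the conformable chain rule gives $D^{\alpha }_{x}W=-2\alpha t^{\alpha }$ and hence $D^{\alpha }_{x}W^{-1/2}=\alpha t^{\alpha }W^{-3/2}$, so that
\begin{equation*}
\sum_{n=0}^{\infty }D^{\alpha }P_{\alpha n}(x)\,t^{\alpha n}=\alpha t^{\alpha }\,W^{-3/2}.
\end{equation*}
Multiplying by $W$ and using $W\cdot W^{-3/2}=W^{-1/2}=\sum_{n}P_{\alpha n}(x)t^{\alpha n}$ turns this into $W\sum_{n}D^{\alpha }P_{\alpha n}(x)t^{\alpha n}=\alpha \sum_{n}P_{\alpha n}(x)t^{\alpha (n+1)}$; expanding $W$ and equating coefficients of $t^{\alpha (n+1)}$ yields the auxiliary identity $D^{\alpha }P_{\alpha (n+1)}-2x^{\alpha }D^{\alpha }P_{\alpha n}+D^{\alpha }P_{\alpha (n-1)}=\alpha P_{\alpha n}$, with the conventions $P_{\alpha (-1)}\equiv 0$ and $D^{\alpha }P_{\alpha 0}=0$ covering the boundary cases.

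Next I would combine the $x$- and $t$-derivatives of $W^{-1/2}$. Exactly as in the computation leading to \eqref{(5.2)}, one has $D^{\alpha }_{t}W^{-1/2}=\alpha (x^{\alpha }-t^{\alpha })W^{-3/2}$, so that $(x^{\alpha }-t^{\alpha })\,D^{\alpha }_{x}W^{-1/2}=t^{\alpha }\,D^{\alpha }_{t}W^{-1/2}$. Passing this through the series (and using $t^{\alpha }\cdot \alpha n\,t^{\alpha (n-1)}=\alpha n\,t^{\alpha n}$) gives $(x^{\alpha }-t^{\alpha })\sum_{n}D^{\alpha }P_{\alpha n}(x)t^{\alpha n}=\alpha \sum_{n}nP_{\alpha n}(x)t^{\alpha n}$, and equating the coefficients of $t^{\alpha n}$ produces \eqref{sec6, eq3}, namely $x^{\alpha }D^{\alpha }P_{\alpha n}-D^{\alpha }P_{\alpha (n-1)}=n\alpha P_{\alpha n}$. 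Finally, eliminating $D^{\alpha }P_{\alpha (n-1)}$ between \eqref{sec6, eq3} and the auxiliary identity above gives \eqref{sec6, eq2}, and then writing \eqref{sec6, eq2} as $D^{\alpha }P_{\alpha (n+1)}=(n+1)\alpha P_{\alpha n}+x^{\alpha }D^{\alpha }P_{\alpha n}$ and \eqref{sec6, eq3} as $D^{\alpha }P_{\alpha (n-1)}=x^{\alpha }D^{\alpha }P_{\alpha n}-n\alpha P_{\alpha n}$ and subtracting immediately yields \eqref{sec6, eq4}.

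The coefficient bookkeeping and the index shifts are routine; the one point that needs care is the conformable calculus on the two-variable generating function --- justifying that $D^{\alpha }$ (in $x$, with $t$ fixed) may be applied termwise to $\sum_{n}P_{\alpha n}(x)t^{\alpha n}$ on the region of convergence, and that the conformable chain rule indeed gives $D^{\alpha }_{x}W^{-1/2}=\alpha t^{\alpha }W^{-3/2}$. Because $W$ is a function of $x^{\alpha }$ and $D^{\alpha }x^{\alpha }=\alpha$, this reduces to the ordinary chain rule for $W^{-1/2}$ as a function of $x^{\alpha }$, which is precisely the manipulation already carried out for the $t$-derivative in the proof of \eqref{(5.2)}; I would simply mirror that argument. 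An alternative route to \eqref{sec6, eq4} worth recording is to apply $D^{\alpha }$ directly to the pure recurrence relation \eqref{sec6, eq1}, using the conformable product rule $D^{\alpha }(x^{\alpha }P_{\alpha n})=\alpha P_{\alpha n}+x^{\alpha }D^{\alpha }P_{\alpha n}$, and then to substitute \eqref{sec6, eq2} and \eqref{sec6, eq3}.
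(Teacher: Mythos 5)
Your argument is correct, and its first half coincides with the paper's: both proofs apply $D^{\alpha}$ in $x$ to the generating relation \eqref{sec3, eq1}, clear the factor $\left(1-2x^{\alpha}t^{\alpha}+t^{2\alpha}\right)$, and compare coefficients to reach the auxiliary identity $D^{\alpha }P_{\alpha (n+1)}-2x^{\alpha }D^{\alpha }P_{\alpha n}+D^{\alpha }P_{\alpha (n-1)}=\alpha P_{\alpha n}$, which is exactly the paper's \eqref{sec6, eq5} (your convention $P_{\alpha(-1)}\equiv 0$ handles $n=0$, which the paper leaves implicit). You diverge in how the second independent relation is produced: you exploit the cross identity $(x^{\alpha }-t^{\alpha })D^{\alpha }_{x}g=t^{\alpha }D^{\alpha }_{t}g$ for $g=\left(1-2x^{\alpha}t^{\alpha}+t^{2\alpha}\right)^{-1/2}$, mirroring the computation behind \eqref{(5.2)}, and read off \eqref{sec6, eq3} directly by matching coefficients of $t^{\alpha n}$; then \eqref{sec6, eq2} follows by eliminating $D^{\alpha}P_{\alpha(n-1)}$ and \eqref{sec6, eq4} by subtraction. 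The paper instead differentiates the already-proved pure recurrence \eqref{sec6, eq1} conformably to get \eqref{sec6, eq6} and solves the linear pair \eqref{sec6, eq5}--\eqref{sec6, eq6} for \eqref{sec6, eq2} and \eqref{sec6, eq3}, then combines them for \eqref{sec6, eq4}. Both routes are sound and of comparable length; yours keeps everything at the level of the generating function and yields \eqref{sec6, eq3} in one coefficient comparison without invoking \eqref{sec6, eq1}, while the paper's reuses the pure recurrence and thereby avoids a second two-variable differentiation of $g$ (your closing alternative --- applying $D^{\alpha}$ to \eqref{sec6, eq1} with the conformable product rule --- is in essence the paper's device). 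Your cautionary remark about termwise $\alpha$-differentiation of the series and the conformable chain rule through $x^{\alpha}$ is the right point to flag; the paper performs these manipulations without comment, so your version is, if anything, slightly more careful.
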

\begin{proof}
According to the fractional differential operator and in view of   \eqref{sec3, eq1}, it follows that 
\begin{align*}
D^{\alpha }\left( 1-2x^{\alpha }t^{\alpha
}+t^{2\alpha }\right) ^{\frac{-1}{2}}&=D^{\alpha }
\sum\limits_{n=0}^{\infty
}P_{\alpha n}\left( x\right)t^{\alpha n}\\
\alpha t^{\alpha }\left( 1-2x^{\alpha}t^{\alpha}+t^{2\alpha }
\right) ^{\frac{-1}{2}}&=\left( 1-2x^{\alpha}t^{\alpha}+t^{2\alpha }\right)
\sum\limits_{n=0}^{\infty }D^{\alpha}P_{\alpha n}\left( x\right) t^{\alpha n}.
\end{align*}
Also, using  \eqref{sec3, eq1}, it is easily verify 
\begin{equation}\label{equatingnow}
\alpha \sum\limits_{n=0}^{\infty }P_{\alpha n}\left(x\right)t^{\alpha \left( n+1\right)}=\sum\limits_{n=0}^{\infty}
D^{\alpha}P_{\alpha n}\left( x\right) t^{\alpha n}-\sum\limits_{n=0}^{\infty}2x^{\alpha }
D^{\alpha }P_{\alpha n}\left(x\right) t^{\alpha \left( n+1\right)}+\sum\limits_{n=0}^{\infty }
D^{\alpha}P_{\alpha n}\left( x\right)t^{\alpha \left( n+2\right)}.
\end{equation}
Equating the coefficients of $t^{\alpha \left( n+1\right)}$ in both sides of \eqref{equatingnow}, we get  
\begin{equation}\label{sec6, eq5}
\alpha P_{\alpha n}\left( x\right) =D^{\alpha}P_{\alpha \left(n+1\right)}\left( x\right)-2x^{\alpha }
D^{\alpha}P_{\alpha n}\left( x\right) +D^{\alpha }P_{\alpha \left( n-1\right)}\left(x\right). 
\end{equation}
Again differentiate \eqref{sec6, eq1} with respect to $x$ in the sense of conformable fractional we deduce
\begin{equation}\label{sec6, eq6}
D^{\alpha }P_{\alpha \left(
n+1\right)}\left( x\right) =\frac{\left(2n+1\right) }{n+1}
\alpha P_{\alpha n}\left( x\right) +\frac{\left(2n+1\right) }{n+1}x^{\alpha
}D^{\alpha }P_{\alpha n}
\left( x\right) -\frac{n}{n+1}D^{\alpha }P_{\alpha \left(
n-1\right)}\left( x\right). 
\end{equation}
Hence using \eqref{sec6, eq5} and \eqref{sec6, eq6} we obtain \eqref{sec6, eq2} and \eqref{sec6, eq3} respectively and
the combination of \eqref{sec6, eq2} and \eqref{sec6, eq3} gives \eqref{sec6, eq4}. 
\end{proof}
\section{Conformable fractional integral of CFLPs}\label{Sec6}
Along with definition \ref{DefFI}, it is easy to get the following.

\begin{cor}\label{cor6.1} \cite{khalil2014new}
Suppose that $f:\left[ 0,\infty \right) \rightarrow  \mathbb{R},$ then for $\gamma \in (0,1]$, the conformable fractional integral $I_{\gamma }f\left( x\right)$ of order $\gamma $ of $f$ is given by  
\begin{equation}\label{Eq 6.1}
 I_{\gamma }f\left( x\right)
=\int\limits_{0}^{x}t^{\gamma -1}f\left( t\right) dt
\end{equation}
\end{cor}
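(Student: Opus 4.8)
The plan is to reduce the corollary to the definition of the conformable fractional integral given in Definition~\ref{DefFI} by specializing the lower endpoint of integration to $a=0$ and recognizing that the factor $x^{1-\alpha}$ in the denominator there is exactly $t^{\gamma-1}$ in the numerator here, after renaming $\alpha$ to $\gamma$. First I would recall Definition~\ref{DefFI}, which states that for $\alpha$-differentiable $f$ the $\alpha$-fractional integral with base point $a$ is
\[
I_\alpha^a f(t) = \int_a^t \frac{f(x)}{x^{1-\alpha}}\,dx.
\]
Then I would simply set $a=0$ and relabel the parameter, writing $\gamma$ in place of $\alpha$, so that $I_\gamma f(x) := I_\gamma^0 f(x) = \int_0^x \frac{f(t)}{t^{1-\gamma}}\,dt$.

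The only remaining point is the algebraic identity $\dfrac{1}{t^{1-\gamma}} = t^{\gamma-1}$, valid for $t>0$, which turns the integrand into the form displayed in \eqref{Eq 6.1}. I would note that since $f$ is defined on $[0,\infty)$ and, by hypothesis following Definition~\ref{DefFI}, is $\alpha$-differentiable (here $\gamma$-differentiable) on $(0,\infty)$, the integral $\int_0^x t^{\gamma-1} f(t)\,dt$ makes sense as an improper integral at the lower limit; this is the same convention already adopted in the paper when it wrote $I_\alpha^{-1}(f)(1) = \int_{-1}^1 \frac{f(x)}{x^{1-\alpha}}\,dx$. No convergence subtlety beyond what Definition~\ref{DefFI} already presumes needs to be addressed.

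Since every ingredient is already in place, there is really no obstacle: the corollary is an immediate specialization of Definition~\ref{DefFI}, and the proof is a one-line substitution together with the observation $t^{1-\gamma}$ in the denominator equals $t^{\gamma-1}$ in the numerator. The attribution to \cite{khalil2014new} reflects that this formula for the conformable integral originates there; my write-up would cite that reference and then state that \eqref{Eq 6.1} follows by taking $a=0$ in Definition~\ref{DefFI} and rewriting the integrand.
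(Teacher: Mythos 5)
Your proposal is correct and matches the paper's treatment: the paper states this corollary as an immediate consequence of Definition~\ref{DefFI} (taking $a=0$, relabeling $\alpha$ as $\gamma$, and rewriting $1/t^{1-\gamma}=t^{\gamma-1}$), citing \cite{khalil2014new} rather than giving a separate argument. Your one-line substitution is exactly the intended justification.
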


\begin{lem}\label{sec6, lem1} \cite{khalil2014new}
Suppose that $f:\left[ 0,\infty \right) \rightarrow \mathbb{R}$ is $\gamma $-differentiable for $\gamma \in (0,1],$ then for all $x>0$ one can write: 
\begin{equation}\label{6.2}
I_{\gamma }D^{\gamma }\left( f\left( x\right) \right)
=f\left(x\right) -f\left( 0\right) 
\end{equation}
\end{lem}
With the aid of lemma \ref{sec6, lem1} and corollary \ref{cor6.1},  the following result can be deduced.
\begin{thm}
For $\gamma \in (0,1]$, then the conformable fractional integral $I_{\gamma }$
of $\alpha$-CFLPs can be written as 
\begin{equation}\label{sec7, eq3}
I_{\gamma }P_{\alpha n}\left( x\right) =\sum\limits_{k=0}^{\left\lfloor \frac{n}{2}\right\rfloor}
\frac{\left(-1\right) ^{k}\left( 2n-2k\right) !}{2^{n}k!\left( n-k\right)
!\left( n-2k\right) !
\left( \left( n-2k\right)\alpha +\gamma \right)}
x^{\alpha \left( n-2k\right) +\gamma}
\end{equation}
\end{thm}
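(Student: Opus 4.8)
The plan is to exploit the linearity of the conformable fractional integral together with the explicit polynomial expansion of $P_{\alpha n}(x)$ established earlier in the excerpt. Recall from \eqref{sec4, eq4} that
\begin{equation*}
P_{\alpha n}\left( x\right) =\sum\limits_{k=0}^{\left\lfloor \frac{n}{2}\right\rfloor }\frac{\left( -1\right) ^{k}\left( 2n-2k\right) !}{2^{n}k!\left( n-k\right)!\left( n-2k\right) !}x^{\alpha \left( n-2k\right) },
\end{equation*}
which is a \emph{finite} linear combination of the fractional monomials $x^{\alpha(n-2k)}$, $0\le k\le \lfloor n/2\rfloor$. Since, by Corollary \ref{cor6.1}, the operator $I_{\gamma}$ is the ordinary integral \eqref{Eq 6.1}, it acts linearly, so it suffices to compute $I_{\gamma}$ on each monomial and then reassemble the sum.

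Second, I would evaluate $I_{\gamma}\bigl(x^{\alpha(n-2k)}\bigr)=\int_{0}^{x} t^{\gamma-1}t^{\alpha(n-2k)}\,dt=\int_{0}^{x} t^{\,\alpha(n-2k)+\gamma-1}\,dt$. Because $0<\gamma\le 1$, $\alpha\in(0,1]$ and $n-2k\ge 0$, the exponent $\alpha(n-2k)+\gamma-1>-1$, so the integral converges at the lower limit and equals $\dfrac{x^{\alpha(n-2k)+\gamma}}{\alpha(n-2k)+\gamma}$; note also that the denominator $\alpha(n-2k)+\gamma$ is strictly positive, hence never vanishes, so the resulting expression is well posed.

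Finally, substituting this back into the termwise image of the expansion and pulling the constant coefficients through the integral yields exactly
\begin{equation*}
I_{\gamma }P_{\alpha n}\left( x\right) =\sum\limits_{k=0}^{\left\lfloor \frac{n}{2}\right\rfloor}\frac{\left(-1\right) ^{k}\left( 2n-2k\right) !}{2^{n}k!\left( n-k\right)!\left( n-2k\right) !\left( \left( n-2k\right)\alpha +\gamma \right)}x^{\alpha \left( n-2k\right) +\gamma},
\end{equation*}
which is \eqref{sec7, eq3}. There is no genuine obstacle here: the only points that deserve a word of justification are that termwise integration is legitimate — immediate, since the sum is finite — and that each monomial lies in the domain of $I_{\gamma}$, i.e.\ that $\alpha(n-2k)+\gamma-1>-1$ and the denominator $\alpha(n-2k)+\gamma$ is nonzero. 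As an alternative route one could instead invoke Lemma \ref{sec6, lem1}: differentiate the claimed right-hand side with $D^{\gamma}$, check it returns $P_{\alpha n}(x)$, and note the right-hand side vanishes at $x=0$, so that $I_{\gamma}D^{\gamma}(\cdot)=(\cdot)-(\cdot)|_{0}$ closes the argument; but the direct computation above is shorter.
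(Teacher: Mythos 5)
Your proposal is correct and follows essentially the same route as the paper: substitute the explicit finite expansion of $P_{\alpha n}(x)$ into the integral definition of $I_{\gamma}$, integrate termwise, and evaluate $\int_{0}^{x} t^{\alpha(n-2k)+\gamma-1}\,dt$. The only difference is your (welcome but not strictly needed) explicit checks that the exponent exceeds $-1$ and that the denominator $\alpha(n-2k)+\gamma$ never vanishes.
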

\begin{proof}
In view of \eqref{Eq 6.1} and  \eqref{6.2}, we obtain 
\begin{equation*}
\begin{split}
I_{\gamma }P_{\alpha n}\left(x\right) &=\int\limits_{0}^{x}t^{\gamma -1}
\sum\limits_{k=0}^{\left\lfloor \frac{n}{2}\right\rfloor }
\frac{\left( -1\right) ^{k}\left( 2n-2k\right) !}{2^{n}k!\left( n-k\right) !\left( n-2k\right) !}t^{\alpha \left( n-2k\right)}dt \\
&=\sum\limits_{k=0}^{\left\lfloor \frac{n}{2}\right\rfloor }
\frac{\left(-1\right) ^{k}\left( 2n-2k\right) !}{2^{n}k!\left( n-k\right)!\left(n-2k\right) !}\int\limits_{0}^{x}t^{ \alpha \left( n-2k\right)+\gamma-1}dt \\
&=\sum\limits_{k=0}^{\left\lfloor \frac{n}{2}\right\rfloor }
\frac{\left(-1\right) ^{k}\left( 2n-2k\right) !}{2^{n}k!\left( n-k\right)!\left(n-2k\right) !\left( \left( n-2k\right) \alpha +\gamma \right) }x^{ \alpha \left(n-2k\right)+\gamma }
 \end{split}
\end{equation*}
and the result follows. 
\end{proof}
\begin{rem}
If $\gamma = \alpha $ in \eqref{sec7, eq3} we have

 \begin{equation*}
I_{\alpha }P_{\alpha n}\left( x\right)=\sum\limits_{k=0}^{\left\lfloor 
\frac{n}{2}\right\rfloor }\frac{\left(-1\right) ^{k}\left( 2n-2k\right) !}{2^{n}k!\left( n-k\right) !
\left(n-2k\right) !\left( n-2k+1\right) \alpha }x^{ \alpha \left( n-2k+1\right)}
\end{equation*}
\end{rem}
\section{ Orthogonality relation and its applications}\label{Sec7}
It is worth mentioning that definition \ref{def 2.1}   opens the door to refine the orthogonality relation  for the case  $m=n$.  We introduce the following interesting result which will be useful in the applications.
\begin{thm}\label{sec7, thm2}
The conformable  fractional Legendre polynomial satisfies
\begin{equation}\label{sec8, eq1}
\int\limits_{-1}^{1}\left[ P_{\alpha n}\left(x\right) \right] ^{2} d_{\alpha }x=\frac{2}{\alpha \left(2n+1\right) },
\end{equation}
where $d_{\alpha }x=x^{\alpha -1}~dx$.
\end{thm}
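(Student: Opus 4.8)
The plan is to reduce the integral $\int_{-1}^{1}[P_{\alpha n}(x)]^{2}\,x^{\alpha-1}\,dx$ to the classical Legendre normalization integral by a change of variable, exploiting the fact that $P_{\alpha n}(x)$ is, by \eqref{sec2, eq3}, an even polynomial in $x^{\alpha}$ (equivalently $P_{\alpha n}(x)=P_{n}(x^{\alpha})$ where $P_{n}$ is the ordinary $n$th Legendre polynomial). Under the substitution $u=x^{\alpha}$ we have $du=\alpha x^{\alpha-1}\,dx$, so $x^{\alpha-1}\,dx=\tfrac{1}{\alpha}\,du$, and the measure $d_{\alpha}x$ is transformed precisely into $\tfrac{1}{\alpha}\,du$. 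Because of the restriction in Definition \ref{def 2.1} that $\alpha=\tfrac{1}{2j+1}$ is the reciprocal of an odd natural number, the map $x\mapsto x^{\alpha}$ is a bijection of $[-1,1]$ onto $[-1,1]$ (it is odd and monotone increasing there), so the endpoints go to $\mp 1$ correctly and no sign or orientation issue arises.

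First I would invoke \eqref{sec2, eq3} (or equivalently the generating-function form \eqref{sec3, eq1}) to record that $P_{\alpha n}(x)=P_{n}(x^{\alpha})$, where $P_{n}$ is the ordinary Legendre polynomial; this is immediate by comparing the series for $P_{\alpha n}$ with the Rodrigues/explicit series for $P_{n}$, replacing $x$ by $x^{\alpha}$. Next I would perform the substitution $u=x^{\alpha}$ in the integral:
\begin{equation*}
\int\limits_{-1}^{1}\left[P_{\alpha n}(x)\right]^{2}x^{\alpha-1}\,dx
=\int\limits_{-1}^{1}\left[P_{n}(x^{\alpha})\right]^{2}x^{\alpha-1}\,dx
=\frac{1}{\alpha}\int\limits_{-1}^{1}\left[P_{n}(u)\right]^{2}\,du.
\end{equation*}
Then I would quote the classical normalization $\int_{-1}^{1}[P_{n}(u)]^{2}\,du=\dfrac{2}{2n+1}$, which yields $\dfrac{2}{\alpha(2n+1)}$ as claimed.

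Alternatively, if one prefers a self-contained argument not relying on the scalar result, one can mimic the classical proof directly in the conformable setting: use the pure recurrence relation \eqref{sec6, eq1} to write $(2n+1)x^{\alpha}P_{\alpha n}=(n+1)P_{\alpha(n+1)}+nP_{\alpha(n-1)}$, multiply \eqref{sec6, eq1} for index $n$ by $P_{\alpha n}$ and the analogous relation for a shifted index, integrate against $d_{\alpha}x$, and use the orthogonality \eqref{sec2, eq4} to kill the cross terms; this produces a first-order recursion for $h_{n}:=\int_{-1}^{1}[P_{\alpha n}]^{2}\,d_{\alpha}x$ of the form $(n+1)h_{n+1}=(2n+1)\big(\text{something}\big)-\dots$, which together with the base case $h_{0}=\int_{-1}^{1}x^{\alpha-1}\,dx=\tfrac{2}{\alpha}$ (again using the odd-reciprocal restriction on $\alpha$) gives the closed form by induction.

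The main obstacle is purely a bookkeeping/hypothesis issue rather than a deep one: one must be careful that the change of variable $u=x^{\alpha}$ is legitimate on the \emph{whole} interval $[-1,1]$, which is exactly why the hypothesis of Definition \ref{def 2.1} (with $\alpha$ the reciprocal of an odd integer, so that $x^{\alpha}$ is well-defined and odd for negative $x$) is needed; without it the integrand is not even defined on $(-1,0)$. Everything else — the identification $P_{\alpha n}(x)=P_{n}(x^{\alpha})$ and the classical value of the normalization constant — is routine.
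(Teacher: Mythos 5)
Your proposal is correct, but it takes a genuinely different route from the paper. The paper proves \eqref{sec8, eq1} by squaring the generating function \eqref{sec3, eq1}, integrating against $d_{\alpha}x$ over $[-1,1]$ (the cross terms disappear by the orthogonality relation \eqref{sec2, eq4}, a step the paper leaves implicit when it writes the square of the series as $\sum_{n}[P_{\alpha n}(x)]^{2}t^{2\alpha n}$), evaluating the left-hand integral in closed form as $\frac{1}{\alpha t^{\alpha}}\ln\frac{1+t^{\alpha}}{1-t^{\alpha}}$, expanding this in powers of $t^{2\alpha}$, and matching coefficients of $t^{2\alpha n}$ — i.e.\ the conformable analogue of the classical generating-function computation. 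You instead observe that $P_{\alpha n}(x)=P_{n}(x^{\alpha})$ and transport the whole integral to the classical one by the substitution $u=x^{\alpha}$, $du=\alpha x^{\alpha-1}dx$, then quote $\int_{-1}^{1}[P_{n}(u)]^{2}du=\frac{2}{2n+1}$; you also correctly identify that the legitimacy of this substitution on all of $[-1,1]$ rests on the hypothesis of Definition \ref{def 2.1} that $\alpha$ is the reciprocal of an odd integer, which is exactly the restriction the paper itself invokes in its proof. Your route is shorter, needs no orthogonality at all for the diagonal case, and makes transparent that the conformable normalization (and indeed \eqref{sec2, eq4} as well) is just the classical statement pulled back under $x\mapsto x^{\alpha}$; the paper's route stays self-contained within the conformable framework and does not import the classical normalization constant, at the cost of the logarithm expansion and the tacit cross-term cancellation. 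Your alternative sketch via the recurrence \eqref{sec6, eq1} is only an outline (the intermediate identity is left as ``something''), but since it is offered as a secondary path this does not affect the correctness of your main argument.
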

\begin{proof}
Mindful of the relation \eqref{sec3, eq1} we get
\begin{equation*}
\frac{1}{1-2x^{\alpha }t^{\alpha }+t^{2\alpha }}=\sum\limits_{n=0}^{\infty }\left[ P_{\alpha n}\left( x\right) \right]^{2}t^{2\alpha n }
\end{equation*}
By applying the conformable fractional integral over $[-1,1]$, it follows that
\begin{equation}\label{sec8, eq2}
\sum\limits_{n=0}^{\infty}\int\limits_{-1}^{1}\left[ P_{\alpha n}\left(x\right)
 \right]^{2}t^{2\alpha n}d_{\alpha }x=\int\limits_{-1}^{1}\frac{d_{\alpha }x}{1-2x^{\alpha }t^{\alpha }+t^{2\alpha }}
\end{equation}
The right hand side of \eqref{sec8, eq2} can be written as
\begin{equation}\label{sec8, eq3}
\int\limits_{-1}^{1}\frac{d_{\alpha }x}{1-2x^{\alpha}t^{\alpha}
+t^{2\alpha }}=\int\limits_{-1}^{1}\frac{x^{\alpha -1}~d~x}{1-2x^{\alpha}t^{\alpha }+t^{2\alpha }}=\frac{-1}{2\alpha t^{\alpha }}\left[\ln \left(1-2x^{\alpha }t^{\alpha }+t^{2\alpha }\right) \right] _{-1}^{1}
\end{equation}
Keeping in mind that the fraction $\alpha $ be in the form  $\frac{1}{k}$ for  $k$ an odd natural number. Therefore due to definition \ref{def 2.1} we have 
\begin{equation*}
\begin{split}
\int\limits_{-1}^{1}\frac{d_{\alpha }x}{1-2x^{\alpha}t^{\alpha}+
t^{2\alpha }}&=\frac{-1}{2\alpha t^{\alpha }}\left[ \ln \left(1-2t^{\alpha}+t^{2\alpha }\right) -\ln \left( 1+2t^{\alpha }+t^{2\alpha}\right) \right]\\
&=\frac{1}{\alpha t^{\alpha }} \ln \left(\frac{ 1+t^{\alpha }}{ 1-t^{\alpha }}\right)
\end{split}
\end{equation*}

Hence in view of  \eqref{sec8, eq3}, we obtain  
\begin{equation*}
\begin{split}
\int\limits_{-1}^{1}\frac{d_{\alpha }x}{1-2x^{\alpha}t^{\alpha}+t^{2\alpha }}
&=\frac{1}{\alpha t^{\alpha }}\left[ 2\left(t^{\alpha }+\frac{t^{3\alpha }}{3}+\frac{t^{5\alpha }}{5}+\dots\right) \right] \\
&=\frac{2}{\alpha }
\left[ 1+\frac{t^{2\alpha }}{3}+\frac{t^{3\alpha }}{5}+\dots\right] =\frac{2}{\alpha }\sum\limits_{n=0}^{\infty }
\frac{1}{2n+1}~t^{2\alpha n}
\end{split}
\end{equation*}
Thus, using \eqref{sec8, eq2} the result is therefore established.
\end{proof}

\subsection{Overview of approximation theory}
Along with the scalar case, one can see that a set of conformable fractional polynomials constitutes a simple set of fractional polynomials \footnote{For the definition of the simple set of polynomials we refer to \cite{rainville1969special}.}.
This leads to the following result which is the analog of that one given in \cite{rainville1969special}.
\begin{prop}\label{propsition 7.1}
Let $\left\{ \varphi _{\alpha n}\left( x\right) \right\} $ be a simple set of
fractional polynomials. If $P\left( x\right) $ is a polynomial of degree $ \alpha m$, there exist constants $a_{k}$ such that 
\begin{equation}\label{sec8, eq5}
P\left( x\right) =\sum\limits_{k=0}^{m}a_{k}\varphi _{\alpha k}\left(x\right), 
\end{equation}
where $a_{k}$ are functions of $k$ and of any parameters involved in $P\left(x\right) $.
\end{prop}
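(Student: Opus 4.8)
The plan is to mimic the classical argument for expanding an arbitrary polynomial in a simple set, adapting it to the fractional grading. First I would make precise what ``polynomial of degree $\alpha m$'' means here: a function of the form $P(x)=\sum_{j=0}^{m} c_j x^{\alpha j}$ with $c_m\neq 0$, so that the relevant ``monomials'' are the powers $x^{\alpha j}$, and a simple set $\{\varphi_{\alpha n}(x)\}$ is one in which $\varphi_{\alpha n}$ has ``degree'' exactly $\alpha n$, i.e. $\varphi_{\alpha n}(x)=\sum_{j=0}^{n} b_{n,j} x^{\alpha j}$ with $b_{n,n}\neq 0$. The CFLPs $P_{\alpha n}(x)$ form such a set, by the explicit formula \eqref{sec2, eq3}.

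The core step is an induction on the degree $m$ (equivalently, an upper-triangular change of basis). For $m=0$, $P(x)=c_0$ is constant and $\varphi_{\alpha 0}(x)$ is a nonzero constant, so $a_0=c_0/b_{0,0}$ works. For the inductive step, given $P(x)=\sum_{j=0}^m c_j x^{\alpha j}$ with $c_m\neq 0$, set $a_m=c_m/b_{m,m}$; then $P(x)-a_m\varphi_{\alpha m}(x)$ has no $x^{\alpha m}$ term, hence is a polynomial of degree $\alpha m'$ for some $m'<m$ (or is zero). By the induction hypothesis it can be written as $\sum_{k=0}^{m-1} a_k \varphi_{\alpha k}(x)$, and adding back $a_m\varphi_{\alpha m}(x)$ gives \eqref{sec8, eq5}. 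Equivalently one can phrase this as solving the triangular linear system obtained by equating coefficients of $x^{\alpha j}$ on both sides of \eqref{sec8, eq5}: the matrix $(b_{n,j})$ is lower triangular with nonzero diagonal entries $b_{n,n}$, hence invertible, which simultaneously yields existence and shows the $a_k$ are determined as (rational) functions of the coefficients $c_j$ of $P(x)$ — and thus of $k$ and of any parameters appearing in $P$.

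I do not expect a genuine obstacle here; the only point needing care is bookkeeping about the fractional grading — specifically, checking that the set $\{x^{\alpha j}\}_{j\ge 0}$ behaves like a basis for the space of conformable polynomials (no collapsing of distinct powers), which is where the standing assumption $\alpha\in(0,1]$, or the restriction $\alpha=1/(2j+1)$ of Definition \ref{def 2.1} when negative $x$ are allowed, is used so that $x^{\alpha j}$ is well defined and the powers are linearly independent. Once that is in place the argument is the verbatim fractional analogue of the scalar result in \cite{rainville1969special}.
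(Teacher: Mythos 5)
Your proposal is correct; in fact the paper gives no proof of Proposition \ref{propsition 7.1} at all, merely asserting it as the analog of the classical result in Rainville \cite{rainville1969special}, and your induction/upper-triangular change-of-basis argument is exactly the standard scalar proof transplanted to the graded monomials $x^{\alpha j}$, i.e.\ the argument the paper implicitly appeals to. Your added remark that one must check the powers $x^{\alpha j}$ do not collapse (so they form a basis of the space of conformable fractional polynomials) is a worthwhile point the paper leaves unstated.
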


\subsubsection{An expansion theorem }
For the purpose of our study, in the forthcoming subsection, we seek an expansion in
terms of CFLPs of the form
\begin{equation}\label{sec8, eq6}
f\left( x\right) =\sum\limits_{n=0}^{\infty }a_{n}P_{\alpha n}\left( x\right) ,~\ \ \left\vert x\right\vert <1,~\ \ \alpha \in (0,1].
\end{equation}
Note that the series on the right-hand side actually convergences to $f\left(x\right) $ providing that $f\left(x\right) $ is sufficiently well behaved.
To obtain such a formula \eqref{sec8, eq6} we first establish the expansion of $x^{\alpha n},~\ \alpha \in (0,1]$ in a series of CFLPs. The previous proposition \ref{propsition 7.1} tells
us that any polynomial can be expanded in a series of conformable fractional Legendre polynomials merely because the $P_{\alpha n}\left( x\right) $ form a simple set. 
The orthogonality of the set $\left\{ P_{\alpha n}\left( x\right) \right\} $ plays a vital role only in the determination of coefficients. So that the expansion of $x^{\alpha n}$ in a series of CFLPs is useful.
\subsubsection{The expansion of $x^{\alpha n},\alpha \in (0,1]$}
\begin{thm}\label{sec8, thm3}
For non-negative integer $n$ and for $\alpha \in (0,1]$ we have the expansion of $x^{\alpha n}$ in terms of CFLPs in the form   
\begin{equation}\label{sec8, eq7}
x^{\alpha n}=\frac{n!}{2^{n}}\sum\limits_{k=0}^{\left\lfloor \frac{n}{2} \right\rfloor }\frac{\left( 2n-4k+1\right) }{k!\left( \frac{3}{2}\right) _{n-k}}P_{\alpha \left( n-2k\right) }\left( x\right) 
\end{equation}
\end{thm}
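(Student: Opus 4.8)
The plan is to invert the finite-sum expression \eqref{sec4, eq4} for $P_{\alpha n}(x)$, which writes each CFLP as a linear combination of the monomials $x^{\alpha(n-2k)}$, by the standard triangular-system argument: since $\{P_{\alpha n}(x)\}$ is a simple set of fractional polynomials (Proposition \ref{propsition 7.1}), the monomial $x^{\alpha n}$ admits an expansion $x^{\alpha n}=\sum_{k=0}^{\lfloor n/2\rfloor} c_{n,k}\,P_{\alpha(n-2k)}(x)$ with undetermined coefficients $c_{n,k}$, and the task is to identify $c_{n,k}=\dfrac{n!\,(2n-4k+1)}{2^{n}\,k!\,\left(\frac{3}{2}\right)_{n-k}}$. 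The parity is forced: because \eqref{sec4, eq4} involves only powers $x^{\alpha(n-2k)}$, expanding $x^{\alpha n}$ can only bring in CFLPs of the same parity as $n$, so indeed only $P_{\alpha(n-2k)}$ appear.

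First I would substitute the proposed right-hand side of \eqref{sec8, eq7} into \eqref{sec4, eq4} (replacing $n$ there by $n-2k$) to get a double sum in powers of $x^{\alpha}$, and then use Lemma \ref{Lem2new} (together with Lemma \ref{Lem3new}) to reindex the iterated series so that the coefficient of each fixed power $x^{\alpha(n-2j)}$ can be collected. One then needs to verify that this coefficient equals $1$ when $j=0$ and vanishes for $1\le j\le\lfloor n/2\rfloor$. Alternatively — and this is the cleaner route I would actually present — I would use the orthogonality relations \eqref{sec2, eq4} and Theorem \ref{sec7, thm2}: multiply the ansatz by $P_{\alpha(n-2j)}(x)$, integrate against $d_{\alpha}x$ over $[-1,1]$, and use $\int_{-1}^{1}P_{\alpha r}P_{\alpha s}\,d_{\alpha}x=\frac{2}{\alpha(2r+1)}\delta_{rs}$ to get
\begin{equation*}
c_{n,j}=\frac{\alpha(2n-4j+1)}{2}\int\limits_{-1}^{1}x^{\alpha n}\,P_{\alpha(n-2j)}(x)\,d_{\alpha}x.
\end{equation*}
The remaining integral is evaluated by inserting the explicit formula \eqref{sec4, eq4} for $P_{\alpha(n-2j)}(x)$ and integrating termwise: each term contributes $\int_{-1}^{1}x^{\alpha(2n-2j-2i)}x^{\alpha-1}\,dx=\dfrac{2}{\alpha(2n-2j-2i+1)}$ (the odd powers integrate to zero, which is why the parity works and why $\alpha=1/(2\ell+1)$ is needed so that $x^{\alpha-1}$ is defined on $[-1,1]$ per Definition \ref{def 2.1}).

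The main obstacle will be the hypergeometric/Pochhammer bookkeeping in that last step: after termwise integration one is left with a single sum over $i$ of a ratio of factorials and $\left(\frac{1}{2}-(n-2j)\right)$-type Pochhammer symbols, which has to be recognized as a terminating ${}_2F_1$ or ${}_3F_2$ evaluable in closed form (a Gauss-type summation). Collapsing that sum to the compact coefficient $\dfrac{n!\,(2n-4j+1)}{2^{n}\,j!\,\left(\frac{3}{2}\right)_{n-j}}$ is the only genuinely delicate calculation; everything else is reindexing and orthogonality. I would carry it out by writing the summand in terms of $\left(\frac{1}{2}\right)_{\bullet}$ and $(1)_{\bullet}$ symbols so the sum matches a known ${}_2F_1(a,b;c;1)$ with $c-a-b>0$, apply Gauss's theorem, and simplify the resulting Gamma quotient using the duplication formula to reach the stated form.
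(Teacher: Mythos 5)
Your strategy is sound and genuinely different from the paper's. The paper never invokes orthogonality for this theorem: it manipulates the generating function \eqref{sec3, eq1}, writing it as $\left(1+t^{2\alpha}\right)^{-1/2}\left(1-\frac{2x^{\alpha}t^{\alpha}}{1+t^{2\alpha}}\right)^{-1/2}$, substituting $t^{\alpha}=\frac{2u^{\alpha}}{1+\sqrt{1-4u^{2\alpha}}}$ so that $u^{\alpha}=\frac{t^{\alpha}}{1+t^{2\alpha}}$, expanding $\left[\frac{2}{1+\sqrt{1-4u^{2\alpha}}}\right]^{n+\frac12}$ as a ${}_2F_1$ in $4u^{2\alpha}$, reindexing via Lemma \ref{Lem2new}, and equating coefficients of $u^{\alpha n}$. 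That route needs no integration and yields the identity for every $\alpha\in(0,1]$ at once. Your route (undetermined coefficients fixed by \eqref{sec2, eq4} and Theorem \ref{sec7, thm2}) is the standard Fourier--Legendre computation, is conceptually cleaner, and gives the moment integral $\int_{-1}^{1}x^{\alpha n}P_{\alpha(n-2j)}(x)\,d_{\alpha}x$ as a by-product; but it inherits the restriction of the $[-1,1]$ orthogonality to $\alpha=1/(2\ell+1)$, so you should close with the (easy) observation that the coefficients you compute are independent of $\alpha$, hence the statement is a polynomial identity in the variable $x^{\alpha}$ and extends to all $\alpha\in(0,1]$.

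One repair in the computational core. After termwise integration, with $m=n-2j$, your coefficient sum is
\begin{equation*}
\sum_{i=0}^{\lfloor m/2\rfloor}\frac{(-1)^{i}\,(2m-2i)!}{2^{m}\,i!\,(m-i)!\,(m-2i)!\,\left(n+m+1-2i\right)},
\end{equation*}
and rewriting the $i$-dependence in Pochhammer form gives a terminating \emph{balanced} series at unit argument, namely ${}_3F_2\!\left(-\tfrac{m}{2},\tfrac{1-m}{2},-m-j-\tfrac12;\ \tfrac12-m,\ \tfrac12-m-j;\ 1\right)$; this is not a Gauss-summable ${}_2F_1(1)$, so "apply Gauss's theorem" as stated would fail. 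The right tool is the Pfaff--Saalsch\"{u}tz theorem (the series is Saalsch\"{u}tzian), after which the duplication-formula clean-up does land on $\frac{n!\,(2n-4j+1)}{2^{n}\,j!\left(\frac32\right)_{n-j}}$; alternatively you can bypass the series entirely by evaluating the moment integral from the Rodrigues formula \eqref{cflpsRedrigues} with repeated conformable integration by parts, which reduces it to a Beta integral. With that substitution of summation theorem, your plan goes through.
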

\begin{proof}
Going back to \eqref{sec3, eq1}  we get 
\begin{equation*}
\begin{split} 
\sum\limits_{n=0}^{\infty }P_{n\alpha }\left( x\right) t^{n\alpha }&=\left( 1-2x^{\alpha }t^{\alpha }+t^{2\alpha }\right) ^{\frac{-1}{2}}\\
&=\left(1+t^{2\alpha }\right) ^{\frac{-1}{2}}\left( 1-\frac{2x^{\alpha }t^{\alpha }}{1+t^{2\alpha }}\right) ^{\frac{-1}{2}}\\
&=\left( 1+t^{2\alpha }\right) ^{\frac{-1}{2}}~_{1}F_{0}\left( \frac{1}{2};-;\frac{2x^{\alpha }t^{\alpha }}{1+t^{2\alpha }}\right)\\
&=\left( 1+t^{2\alpha }\right) ^{\frac{-1}{2}}\sum\limits_{n=0}^{\infty }\frac{\left( \frac{1}{2}\right) _{n}}{n!}\left( \frac{2x^{\alpha }t^{\alpha }}{1+t^{2\alpha }}\right) ^{n}.
\end{split}
\end{equation*} 
Therefore
\begin{equation}\label{sec8, eq8}
\sum\limits_{n=0}^{\infty }\frac{\left( \frac{1}{2}\right) _{n}\left(
2x^{\alpha }\right) ^{n}}{n!}\left( \frac{t^{\alpha }}{1+t^{2\alpha }}
\right) ^{n}=\left( 1+t^{2\alpha }\right) ^{\frac{1}{2}}\sum\limits_{n=0}^{
\infty }P_{n\alpha }\left( x\right) t^{n\alpha }
\end{equation}
Putting $t^{\alpha }=\frac{2u^{\alpha }}{1+\sqrt{1-4u^{2\alpha }}},$ it easily follows that \newline
\begin{equation*}
1+t^{2\alpha }=\frac{2}{1+\sqrt{1-4u^{2\alpha }}}~ \text{and then}~ u^{\alpha }=\frac{
t^{\alpha }}{1+t^{2\alpha }}.
\end{equation*}
Hence, equation \eqref{sec8, eq8} verifies 
\begin{equation}\label{sec8, eq9}
\sum\limits_{n=0}^{\infty }\frac{\left( \frac{1}{2}\right) _{n}\left(
2x^{\alpha }\right) ^{n}}{n!}u^{\alpha n}=\sum\limits_{n=0}^{\infty
}P_{n\alpha }\left( x\right) u^{n\alpha }\left[ \frac{2}{1+\sqrt{
1-4u^{2\alpha }}}\right] ^{n+\frac{1}{2}}
\end{equation}
Therefore we have
\begin{equation*}
\begin{split} 
\left[ \frac{2}{1+\sqrt{1-4u^{2\alpha }}}\right] ^{n+\frac{1}{2}}&=~_{2}F_{1}\left[ \frac{1}{2}
\left( n+\frac{1}{2}\right) ,\frac{1}{2}\left(n+\frac{3}{2}\right) ;\left( n+\frac{3}{2}\right) ;4u^{2\alpha }\right]\\
&=\sum\limits_{k=0}^{\infty }\frac{\left( \frac{1}{2}\left( n+\frac{1}{2}\right) 
\right) _{k}\left( \frac{1}{2}\left( n+\frac{3}{2}\right) \right)_{k}}{\left( n+\frac{3}{2}\right) _{k}k!}2^{2k}u^{2\alpha k}\\
&=\sum\limits_{k=0}^{\infty }\frac{\left( n+\frac{1}{2}\right) _{2k}}{\left(n+\frac{3}{2}\right) _{k}k!}u^{2\alpha k}\\
&=\sum\limits_{k=0}^{\infty }\frac{\left( 2n+1\right) \left( \frac{1}{2}\right) _{n+2k}}{k!\left( \frac{3}{2}\right) _{n+k}}u^{2\alpha k}.
\end{split}
\end{equation*} 
Then in view of  \eqref{sec8, eq9} we obtain
\begin{equation}\label{Eq7.9}
\begin{split} 
\sum\limits_{n=0}^{\infty }\frac{\left( \frac{1}{2}\right) _{n}\left(2x^{\alpha }\right) ^{n}}{n!}u^{\alpha n}&=
\sum\limits_{n=0}^{\infty}\sum\limits_{k=0}^{\infty }\frac{\left( 2n+1\right) \left( \frac{1}{2}
\right) _{n+2k}}{k!\left( \frac{3}{2}\right) _{n+k}}P_{n\alpha }\left(x\right) u^{\left( n+2k\right)
 \alpha }\\
&=\sum\limits_{n=0}^{\infty}\sum\limits_{k=0}^{\left\lfloor \frac{n}{2}\right\rfloor }
\frac{\left(2n-4k+1\right) \left( \frac{1}{2}\right) _{n}}{k!\left( \frac{3}{2}\right)_{n-k}}P_{\left( n-2k\right) \alpha }
\left( x\right) u^{n\alpha }.
\end{split}
\end{equation}
Equating the coefficients of $u^{\alpha n}$ in both sides of \eqref{Eq7.9}, we get  
\begin{equation*}
\frac{\left( \frac{1}{2}\right) _{n}\left( 2x^{\alpha }\right) ^{n}}{n!}
=\sum\limits_{k=0}^{\left\lfloor \frac{n}{2}\right\rfloor }\frac{\left(2n-4k+1\right) 
\left( \frac{1}{2}\right) _{n}}{k!\left( \frac{3}{2}\right)_{n-k}}P_{\left( n-2k\right) \alpha }\left( x\right) 
\end{equation*}
Therefore,
\begin{equation*}
x^{n\alpha }=\frac{n!}{2^{n}}\sum\limits_{k=0}^{\left\lfloor \frac{n}{2}
\right\rfloor }\frac{\left( 2n-4k+1\right) }{k!\left( \frac{3}{2}\right)_{n-k}}P_{\left( n-2k\right) \alpha }\left( x\right). 
\end{equation*}
\end{proof}
\subsubsection{The expansion of analytic functions}
\noindent
Theorem \ref{sec8, thm3} can be employed to get  explicit formulas for the coefficients in the expansion of analytic functions by means of  CFLPs series. 
Such type of expansion theory was treated classically in several  approaches, we may mention for example \cite{whittaker1927modern, szeg1939orthogonal}. For a general study of the theory of expansion  of analytic function in terms of series of polynomials, we refer to the works of Whittaker \cite{whittaker1949sries}, Boas \cite{boas1958polynomial} and in higher dimensions (Clifford analysis) by Abul-Ez et al. \cite{abul1990basic, zayed2012generalized, zayed2020generalized}.

In usual classical  calculus, Taylor's power series representation of a function $f$ around certain points is not always guaranteed, unlike the case in the theory of conformable fractional calculus. 
Abdeljawad \cite{abdeljawad2015conformable} introduced the fractional power series expansion for an infinity $\alpha -$differentiable functions via the following fundamental result.
\begin{thm}\label{sec8, thm4}
Let $f$  be an infinitely  conformable fractional differentiable function for some $0<\alpha \leq 1$ around a point $x_{0}.$ Then $f$ admits a Taylor conformable fractional power series expansion as:
\begin{equation}\label{sec8, eq10}
f\left( x\right) =\sum\limits_{k=0}^{\infty }\frac{\left( D^{\alpha
}f\right) ^{\left( k\right) }\left( x_{0}\right)}{\alpha ^{k}k!}\left( x-x_{0}\right) ^{\alpha k},~~~x_{0}<x<x_{0}+R^{1/\alpha },~~\ R>0
\end{equation}
\end{thm}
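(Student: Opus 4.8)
The plan is to reduce the assertion to the classical (one–variable, real–analytic) Taylor expansion by a change of independent variable that ``straightens out'' the conformable derivative. First I would record the elementary fact that, for an $\alpha$‑differentiable $f$, the conformable derivative anchored at $x_{0}$ acts as $D^{\alpha}f(x)=(x-x_{0})^{1-\alpha}f'(x)$ on $(x_{0},\infty)$; this drops out of the limit definition \eqref{sec2, eq1} (equivalently Definition \ref{def 2.1}) by writing the increment as $h=\varepsilon(x-x_{0})^{1-\alpha}$ and recognising the remaining quotient as the ordinary difference quotient in $h$. Note this is the version of $D^{\alpha}$ appropriate to expansions \emph{about} $x_{0}$, and it reduces to $f'$ at $\alpha=1$ consistently with Remark~2.1(3).

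Next I would set $\tau=\frac{(x-x_{0})^{\alpha}}{\alpha}$, i.e.\ $x=x_{0}+(\alpha\tau)^{1/\alpha}$, so that $\tau=0\leftrightarrow x=x_{0}$ and $\frac{dx}{d\tau}=(x-x_{0})^{1-\alpha}$, and define $g(\tau):=f\!\left(x_{0}+(\alpha\tau)^{1/\alpha}\right)$. The ordinary chain rule combined with the identity above gives $g'(\tau)=f'(x)(x-x_{0})^{1-\alpha}=(D^{\alpha}f)(x)$; since $D^{\alpha}$ maps infinitely $\alpha$‑differentiable functions to infinitely $\alpha$‑differentiable functions, an induction yields $g^{(k)}(\tau)=(D^{\alpha})^{(k)}f(x)$ for all $k$, and in particular $g^{(k)}(0)=(D^{\alpha}f)^{(k)}(x_{0})$. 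Here one must check that the one‑sided derivatives of $g$ at $\tau=0$ exist and agree with the iterated conformable derivatives of $f$ at $x_{0}$: this is exactly what the boundary convention $D^{\alpha}f(0)=\lim_{x\to0^{+}}D^{\alpha}f(x)$ is designed to supply, and it also disposes of the fact that $x\mapsto\tau$ is only $\alpha$‑H\"older at the endpoint $x_{0}$ (one works on $(x_{0},x_{0}+R^{1/\alpha})$ with one‑sided limits at $x_{0}$). Then $g$ is an ordinary infinitely differentiable function on a right neighbourhood $[0,\widetilde R)$, so Taylor's formula with remainder gives $g(\tau)=\sum_{k=0}^{N}\frac{g^{(k)}(0)}{k!}\tau^{k}+R_{N}(\tau)$; substituting back $g(\tau)=f(x)$, $g^{(k)}(0)=(D^{\alpha}f)^{(k)}(x_{0})$, $\tau^{k}=\alpha^{-k}(x-x_{0})^{\alpha k}$ reproduces the partial sums of \eqref{sec8, eq10}, and the image of $[0,\widetilde R)$ under the substitution is precisely $x_{0}<x<x_{0}+R^{1/\alpha}$ after writing $R:=\alpha\widetilde R$.

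The main obstacle is the final convergence step: ``infinitely $\alpha$‑differentiable'' by itself does not force $g$ to coincide with its Taylor series (the usual $e^{-1/\tau^{2}}$ obstruction survives the change of variables), so the hypothesis must be read as \emph{conformable analyticity} near $x_{0}$ — or, equivalently, as a growth/remainder bound ensuring $R_{N}(\tau)\to0$ on the relevant interval. I would therefore state this assumption explicitly; once it is in force, the classical convergence $R_{N}\to0$ for $g$ transfers verbatim to the conformable series and the proof is complete.
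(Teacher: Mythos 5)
The paper itself offers no proof of this theorem: it is quoted verbatim from Abdeljawad \cite{abdeljawad2015conformable}, so there is nothing in-text to compare your argument against, and I will judge the proposal on its own terms. Your route --- substituting $\tau=\frac{(x-x_{0})^{\alpha}}{\alpha}$, checking $\frac{dx}{d\tau}=(x-x_{0})^{1-\alpha}$, so that $g(\tau)=f(x(\tau))$ satisfies $g^{(k)}(\tau)=\bigl((D^{\alpha})^{(k)}f\bigr)(x)$, and then invoking the classical Taylor theorem for $g$ --- is sound and is essentially the standard way this result is understood; the bookkeeping of the interval ($R=\alpha\widetilde R$ giving $x_{0}<x<x_{0}+R^{1/\alpha}$) is also correct. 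Two of your side remarks deserve emphasis. First, your re-anchoring of the derivative, $D^{\alpha}f(x)=(x-x_{0})^{1-\alpha}f'(x)$, is not cosmetic: with the paper's own definition \eqref{sec2, eq1} (anchored at $0$) the statement \eqref{sec8, eq10} is literally meaningful only for $x_{0}=0$, which is the only case the paper subsequently uses in \eqref{sec8, eq11}; the version for general $x_{0}$ requires the conformable derivative ``starting from $x_{0}$'' exactly as you use it, and this is how the cited source states it.

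Second, the convergence caveat you raise is a genuine gap in the theorem as stated rather than in your argument: infinite $\alpha$-differentiability alone cannot force the series to converge to $f$ (your $e^{-1/\tau^{2}}$ example survives the change of variables untouched), and the source effectively argues the other way around --- it assumes $f$ admits a fractional power series $\sum c_{k}(x-x_{0})^{\alpha k}$ and identifies $c_{k}=\frac{(D^{\alpha}f)^{(k)}(x_{0})}{\alpha^{k}k!}$ by repeated application of $D^{\alpha}$ term by term, i.e.\ it proves uniqueness of the coefficients under an analyticity hypothesis. So your proposal, with the explicitly added hypothesis of conformable analyticity (or a remainder bound $R_{N}(\tau)\to 0$), proves a corrected version of the statement; be aware that what you have is not a proof of the literal claim that smoothness in the conformable sense implies the expansion, because no such proof can exist. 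With that qualification stated up front, the argument is complete and, if anything, more transparent than simply citing the coefficient-identification computation of \cite{abdeljawad2015conformable}.
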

Here $\left( D^{\alpha }f\right) ^{\left( k\right) }\left(
x_{0}\right) $ means that repeated application of the conformable fractional
derivative $k$ times at a point $x=x_{0}.$

Taking $x_{0}=0$ one easily deduces the analog of $\alpha $-Maclaurin expansion in the form 
\begin{equation}\label{sec8, eq11}
f\left( x\right) =\sum\limits_{k=0}^{\infty }\frac{\left( D^{\alpha}f\right) ^{\left( k\right) }\left( 0\right) }{\alpha ^{k}k!} x^{\alpha k}
,~~~0<x<R^{1/\alpha },~~\ R>0,
\end{equation}
where also $\left( D^{\alpha }f\right) ^{\left( k\right) }\left( 0\right) 
$ means that repeated application of the conformable fractional derivative $k$ times at a point $x=0.$ \newline
With the aid of the formula \eqref{sec8, eq11} and applying theorem \eqref{sec8, thm3} attain the following. \newline 
\begin{equation*}
f\left( x \right) =\sum\limits_{k=0}^{\infty}\sum\limits_{k=0}^{\left\lfloor \frac{n}{2}\right\rfloor }
\left[ \frac{a_{n}}{n!}\frac{n!}{2^{n}}\frac{\left( 2n-4k+1\right) }{k!\left( \frac{3}{2}\right) _{n-k}}P_{\left( n-2k\right) 
\alpha }\left( x\right) \right]. 
\end{equation*}
Hence, 
\begin{equation}\label{sec8, eq12}
f\left( x\right) =\sum\limits_{k=0}^{\infty }\sum\limits_{k=0}^{\infty }\frac{\left( 2n+1\right) a_{n+2k}}{2^{n+2k}k!\left( \frac{3}{2}\right) _{n+k}}P_{n\alpha }\left( x\right).
\end{equation}
\section{Shifted conformable fractional Legendre polynomials (SCFLPs)}\label{Sec8}
Abu Hammed and Khalil \cite{hammad2014legendre} considered the following conformable fractional Legendre differential equation 
\begin{equation}\label{sec11, eq1}
\left( 1-t^{2\alpha }\right) D^{\alpha }D^{\alpha }P_{\alpha n}\left(t\right) 
-2t^{\alpha }D^{\alpha }P_{\alpha n}\left( t\right) +n\left(n+1\right) P_{\alpha n}\left( t\right) =0.
\end{equation}
The transformation $t^{\alpha }=2x^{\alpha }-1$ gives the shifted conformable fractional differential equation  equations in the form.
\begin{equation}\label{sec11, eq2}
x^{\alpha }\left( 1-x^{\alpha }\right) D^{\alpha }D^{\alpha }P_{\alpha n}^{\ast}
\left( x\right)-\left( 2x^{\alpha }-1\right) D^{\alpha }P_{\alpha n}^{\ast }\left( x\right) 
+n\left( n+1\right) P_{\alpha n}^{\ast }\left(x\right) =0.
\end{equation}
Note that $P_{\alpha n}\left( t\right)$ and  $P_{\alpha n}^{\ast }\left( x\right) $
represent  the CFLPs and SCFLPs, respectively. 
Accordingly the shifted fractional Legendre polynomials of degree $n\alpha ,$ can be stated by means of recurrence relation \eqref{sec6, eq1} as
\begin{equation}\label{sec11, eq3}
P_{\alpha \left( n+1\right) }^{\ast }\left( x\right) =\frac{\left(2n+1\right) \left( 2x^{\alpha }-1\right) }{n+1}P_{\alpha n}
^{\ast }\left(x\right) -\frac{n}{n+1}P_{\alpha \left( n-1\right) }^{\ast }\left( x\right), n=1,2,\dots,
\end{equation}
where $P_{0}^{\ast }\left( x\right) =1$ and $ P_{\alpha }^{\ast }\left( x\right)=\left( 2x^{\alpha }-1\right).$ \newline
The analytical formula of SCFLPs  of degree $\alpha n$ will be such that 
\begin{equation}\label{sec11, eq4}
P_{\alpha n}^{\ast }\left( x\right) =\sum\limits_{k=0}^{n}\frac{\left(-1\right) 
^{n+k}\left( n+k\right) !}{\left( n-k\right) !\left( k!\right) ^{2}}x^{\alpha k}.
\end{equation}
As shown previously in theorem \ref{sec7, thm2}, the SCFLPs are orthogonal  over $[0,1],$ hence we may write

\begin{equation}
\int\limits_{0}^{1}P_{\alpha n}^{\ast }\left( x\right) P_{\alpha m}^{\ast}\left( x\right) x^{\alpha -1}dx
=\frac{1}{\alpha \left( 2n+1\right) }\delta_{nm},
\end{equation}
where $\delta _{nm}$ is the Kronker delta function. 

Now, we are going to establish the following famous formula for the SCFLPs. 
\subsection{Rodrigues formula for SCFLPs}
It is well known that one of the basic ways to define a sequence of orthogonal polynomials is to
use their Rodrigues' formula \cite{rusev2005classical}. If it is known, then a lot of nice properties of the
 polynomials can be derived. That is why generalizations of these formulas occupy mathematicians’
attention in last two decades, both to define new classes of special functions and polynomials
 and to include fractional order differentiation. Starting from a Rodrigues formula for shifted
Legendre polynomials, Rajkovi\'{c} and Kiryakova \cite{rajkovic2010legendre} investigated the special functions thus they
defined. Further they studied the orthogonality property  which held only for some
special cases.

According to the notation of CFD we have $D^{\alpha n}=D^{\alpha}.D^{\alpha}.D^{\alpha}\dots D^{\alpha},n-$ times,  and due to the fact $D^{\alpha } x^{p}=px^{p-\alpha },$ the authors  in \cite{hammad2014legendre} stated the  following  Rodrigues formula of the CFLPs $P_{\alpha n}\left( x\right)$ in the form 
\begin{equation}\label{cflpsRedrigues}
P_{\alpha n}\left( x\right) =\frac{1}{\alpha ^{n}.2 ^{n}n!}D^{\alpha n} \left( x^{2\alpha }-1\right) ^{n}.
\end{equation}
Along with the formula \eqref{cflpsRedrigues}, the following theorem provides  the Rodrigues formula for the SCFLPs, $P_{\alpha n}^{\ast }\left( x\right)$.
\begin{thm}
The shifted conformable fractional Legendre polynomials can be written  in the sense of  conformable derivative as:
\begin{equation}\label{sec11, eq5}
P_{\alpha n}^{\ast }\left( x\right) =\frac{1}{\alpha ^{n}n!}D^{\alpha n}\left[ x^{\alpha n}\left( x^{\alpha }-1\right) ^{n}\right]. 
\end{equation}
\end{thm}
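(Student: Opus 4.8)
The plan is to derive the Rodrigues formula for $P_{\alpha n}^{\ast}(x)$ by starting from the known Rodrigues formula \eqref{cflpsRedrigues} for the unshifted CFLPs and pushing the change of variable $t^{\alpha}=2x^{\alpha}-1$ through the $n$-fold conformable differentiation. A cleaner route, however, is to verify \eqref{sec11, eq5} directly against the explicit analytical formula \eqref{sec11, eq4}. First I would expand the product inside the bracket: writing $\left(x^{\alpha}-1\right)^{n}=\sum_{j=0}^{n}\binom{n}{j}(-1)^{n-j}x^{\alpha j}$, so that $x^{\alpha n}\left(x^{\alpha}-1\right)^{n}=\sum_{j=0}^{n}\binom{n}{j}(-1)^{n-j}x^{\alpha(n+j)}$. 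Then I would apply $D^{\alpha n}$ term by term, using the basic rule $D^{\alpha}x^{p}=px^{p-\alpha}$ iterated $n$ times, which gives $D^{\alpha n}x^{\alpha m}=\alpha^{n}\,m(m-1)\cdots(m-n+1)\,x^{\alpha(m-n)}=\alpha^{n}\dfrac{m!}{(m-n)!}x^{\alpha(m-n)}$ whenever $m\ge n$ (and $0$ otherwise). Here $m=n+j\ge n$ always, so every term survives and the power drops to $x^{\alpha j}$.

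Carrying this out, $D^{\alpha n}\!\left[x^{\alpha n}(x^{\alpha}-1)^{n}\right]=\alpha^{n}\sum_{j=0}^{n}\binom{n}{j}(-1)^{n-j}\dfrac{(n+j)!}{j!}x^{\alpha j}$. Dividing by $\alpha^{n}n!$ and simplifying $\dfrac{1}{n!}\binom{n}{j}=\dfrac{1}{j!(n-j)!}$ yields $\dfrac{1}{\alpha^{n}n!}D^{\alpha n}\!\left[x^{\alpha n}(x^{\alpha}-1)^{n}\right]=\sum_{j=0}^{n}\dfrac{(-1)^{n-j}(n+j)!}{(n-j)!(j!)^{2}}x^{\alpha j}$, which is exactly the right-hand side of \eqref{sec11, eq4}. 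Since $(-1)^{n-j}=(-1)^{n+j}$, the two expressions coincide, establishing the claim.

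The only genuinely delicate point is justifying the termwise action of the iterated conformable derivative $D^{\alpha n}$ on the finite sum and, in particular, the power rule $D^{\alpha n}x^{\alpha m}=\alpha^{n}\frac{m!}{(m-n)!}x^{\alpha(m-n)}$; this follows by a trivial induction on $n$ from $D^{\alpha}x^{p}=px^{p-\alpha}$ (stated in the excerpt just before \eqref{cflpsRedrigues}) together with linearity of $D^{\alpha}$, the latter being part of the good behaviour of the conformable derivative noted in Remark~1.1(1). One should also remark that, as in the scalar case, an alternative derivation proceeds from \eqref{cflpsRedrigues} via the substitution $t^{\alpha}=2x^{\alpha}-1$: under this substitution $D_t^{\alpha}=\tfrac12 D_x^{\alpha}$ in the conformable sense and $(t^{2\alpha}-1)=(2x^{\alpha}-1)^2-1=4x^{\alpha}(x^{\alpha}-1)$, and tracking the constants $2^{-n}$ through the $n$-fold differentiation reproduces \eqref{sec11, eq5}. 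I would present the direct verification against \eqref{sec11, eq4} as the main argument, since it avoids any subtlety in how $D^{\alpha}$ transforms under the change of variable and keeps the bookkeeping of powers of $2$ and $\alpha$ transparent.
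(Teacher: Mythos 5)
Your proposal is correct and is essentially the paper's own proof: both expand $x^{\alpha n}\left(x^{\alpha}-1\right)^{n}$ by the binomial theorem, apply the iterated conformable power rule $D^{\alpha n}x^{\alpha m}=\alpha^{n}\frac{m!}{(m-n)!}x^{\alpha(m-n)}$ term by term, and compare with the analytical formula \eqref{sec11, eq4}. The only difference is cosmetic indexing (you sum over ascending powers $x^{\alpha j}$ directly, whereas the paper sums over $x^{\alpha(2n-i)}$ and then substitutes $k=n-i$), so no further comment is needed.
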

\begin{proof}
First observe that 
\begin{equation*}
x^{\alpha n}\left( x^{\alpha }-1\right) ^{n}=\sum\limits_{i=0}^{n}\frac{n!}{\left( n-i\right)!i!}\left(x^{2\alpha }\right) ^{n-i}\left( -x^{\alpha}\right)^{i}
=\sum\limits_{i=0}^{n}\frac{\left( -1\right) ^{i}~\ n!}{\left(n-i\right) !i!}x^{\alpha \left( 2n-i\right). }
\end{equation*}
Thus, in virtue of  the CFD definition we have
\begin{eqnarray*}
\begin{aligned}
 D^{\alpha n}\left[ x^{\alpha n}\left( x^{\alpha }-1\right)
^{n}
\right] &=\sum\limits_{i=0}^{n}\frac{\left( -1\right) ^{i}~n!}{
\left(
n-i\right) !i!}D^{\alpha n}\left[ x^{\alpha \left( 2n-i\right) }
\right] \\
&=\alpha ^{n}\left( n!\right)
\sum\limits_{i=0}^{n}\frac{\left(
-1\right) ^{i}~\ \left( 2n-i\right) !}{
\left[ \left( n-i\right) !\right]
^{2}i!}x^{\alpha \left( n-i\right) }.
\end{aligned}
\end{eqnarray*}
Putting $k=n-i,$ thus yielding
\begin{eqnarray*}
D^{\alpha n}\left[ x^{\alpha n}\left(x^{\alpha }-1\right) ^{n}\right]=\alpha ^{n}\left( n!\right)
\sum\limits_{i=0}^{n}\frac{\left( -1\right)^{n+k}~\ \left( n+k\right) !}{\left[ k!\right] ^{2}\left( n-k\right) !}
x^{\alpha k}=\alpha ^{n}\left(n!\right) {P_{\alpha n}^{\ast}\left(x\right) }.
\end{eqnarray*}
\end{proof}
\subsection{Applications}
It is well known that orthogonal functions have a vital role in dealing with diverse mathematical and physical problems.
In this subsection, we derive a new approximate formula of conformable fractional derivative based on SCFLPs. The known spectral Legendre collocation method is established for solving some interesting FDEs.     
 
The idea behind the used approach (either described in the Caputo sense or others) with Tau or collocation methods (see \cite{saadatmandi2010new, kazem2013fractional, ccerdik2020numerical}), is to derive a general formulation for 
fractional Legendre functions and product operational matrices. These matrices together with either Tau or collocation methods are then utilized to simplify the solution of the proposed problem to the solution of a system of algebraic type  equations.  The proposed technique is described as follows.

In view of relation \eqref{sec11, eq4}, the explicit analytical form of SCFLPs  of degree 
$i\alpha,\alpha \in (0,1]$ may be written as:

\begin{equation}\label{sec12, eq1}
P_{\alpha i}^{\ast }\left( x\right) =\sum\limits_{s=0}^{i}b_{s,i}x^{s\alpha},~~ \text{where} ~~ b_{s,i}=\frac{
\left( -1\right) ^{s+i}\left( i+s\right) !}{\left(i-s\right) !\left(
s!\right) ^{2}}.
\end{equation}
A combination of relation \eqref{sec2, eq4} and theorem \ref{sec7, thm2} we have the orthogonality of the  SCFLPs on $[0,1]$ and therefore: 
\begin{equation}\label{sec12, eq2}
\int\limits_{0}^{1}P_{\alpha
i}^{\ast }\left( x\right) P_{\alpha j}^{\ast }\left( x\right)x^{\alpha -1}dx=\frac{1}{\alpha
\left( 2i+1\right) }\delta _{ij}.
\end{equation}
Assuming  $y=f\left(x\right) $ defined over the interval $[0,1]$ has an expansion in the form
\begin{equation}\label{sec12, eq3}
f\left( x\right) =\sum\limits_{i=0}^{\infty
}a_{i}P_{\alpha i}^{\ast }\left( x\right),
\end{equation}
where $a_{i}$ are determined by:
\begin{equation}\label{sec12, eq4}
a_{i}=\alpha \left( 2i+1\right)
\int\limits_{0}^{1}P_{\alpha i}^{\ast }\left(x\right) f\left( x\right) x^{\alpha
-1}dx, i=0,1,2,\dots
\end{equation}
In practice, only the first $\left(m+1\right) $ terms of SCFLPs are considered, so that
\begin{equation}\label{sec12, eq5}
y_{m}\left( x\right)
=\sum\limits_{i=0}^{m}a_{i}P_{\alpha i}^{\ast }\left( x\right). 
\end{equation}

\begin{thm}\label{sec12, thm1}
The conformable fractional derivative of the SCFLPs of order $\gamma >0$ can be formulated 
as:
\begin{equation}\label{sec12, eq6}
D^{\gamma }P_{\alpha i}^{\ast }\left( x\right)
=\sum\limits_{s=0}^{i}b_{s,i}^{\prime
}\frac{\Gamma \left( \alpha
s+1\right) }{\Gamma \left( \alpha s-\left\lfloor
\gamma \right\rfloor
\right) }x^{s\alpha -\gamma },
\end{equation}
where $b_{s,i}^{\prime }=0$
when \bigskip $s\alpha \in \mathbb{N}_{0}$ and $s\alpha <\gamma,$ on the
otherwise $b_{s,i}^{\prime }=b_{s,i}$
\end{thm}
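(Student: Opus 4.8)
\textbf{Proof plan for Theorem \ref{sec12, thm1}.} The plan is to start from the explicit representation \eqref{sec12, eq1} of the shifted polynomials, namely $P_{\alpha i}^{\ast}(x)=\sum_{s=0}^{i}b_{s,i}\,x^{s\alpha}$, and apply the conformable fractional derivative $D^{\gamma}$ term by term using linearity. Thus the whole problem reduces to computing $D^{\gamma}\bigl(x^{s\alpha}\bigr)$ for each fixed exponent $p=s\alpha$, and then re-assembling the sum. First I would recall that $D^{\gamma}$ of order $\gamma>0$ is understood as the $\lfloor\gamma\rfloor$-fold composition of the integer derivative followed by (or intertwined with) a conformable derivative of the remaining fractional order, so that acting on a pure power $x^{p}$ one obtains a constant multiple of $x^{p-\gamma}$; the constant is the falling-factorial-type coefficient that, written with Gamma functions, is $\dfrac{\Gamma(p+1)}{\Gamma(p-\lfloor\gamma\rfloor)}$ as displayed in \eqref{sec12, eq6}.

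The key steps, in order, are: (1) invoke \eqref{sec12, eq1} and linearity of $D^{\gamma}$ to write $D^{\gamma}P_{\alpha i}^{\ast}(x)=\sum_{s=0}^{i}b_{s,i}\,D^{\gamma}\bigl(x^{s\alpha}\bigr)$; (2) evaluate $D^{\gamma}\bigl(x^{s\alpha}\bigr)$, distinguishing the generic case from the degenerate one. The degenerate case is exactly when $s\alpha$ is a nonnegative integer strictly smaller than $\gamma$: then repeated integer differentiation annihilates the monomial $x^{s\alpha}$ before the fractional part is applied, so that term contributes nothing; this is encoded by setting $b'_{s,i}=0$ in that situation and $b'_{s,i}=b_{s,i}$ otherwise. (3) In the generic case, record that $D^{\gamma}\bigl(x^{s\alpha}\bigr)=\dfrac{\Gamma(\alpha s+1)}{\Gamma(\alpha s-\lfloor\gamma\rfloor)}\,x^{\alpha s-\gamma}$, which follows from iterating the elementary conformable rule $D^{\alpha}x^{p}=p\,x^{p-\alpha}$ for the fractional portion and the ordinary power rule for the $\lfloor\gamma\rfloor$ integer differentiations, then rewriting the resulting product of consecutive factors as a ratio of Gamma values. (4) Substitute back into the sum from step (1) to obtain \eqref{sec12, eq6}.

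The main obstacle I anticipate is step (2)–(3): getting the bookkeeping of the constant exactly right and justifying the Gamma-function form uniformly. One must be careful that the factor produced is $(\alpha s)(\alpha s-1)\cdots(\alpha s-\lfloor\gamma\rfloor+1)$ from the integer part times an $\alpha$-type factor from the conformable part, and that this whole product collapses to $\Gamma(\alpha s+1)/\Gamma(\alpha s-\lfloor\gamma\rfloor)$; this identity is transparent when $\alpha s$ is a positive integer but needs the Gamma-function extension of the falling factorial in general, and it is precisely on the borderline values $\alpha s\in\mathbb{N}_{0}$ with $\alpha s<\gamma$ that $\Gamma(\alpha s-\lfloor\gamma\rfloor)$ has a pole, forcing the coefficient to vanish — which is consistent with, and the reason for, the convention $b'_{s,i}=0$ there. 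Once this case analysis is cleanly stated, the remainder is routine substitution and the theorem follows.
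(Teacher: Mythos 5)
Your proposal is correct and follows essentially the same route as the paper: apply the linearity of the conformable derivative to the explicit expansion \eqref{sec12, eq1} and use the power rule $D^{\gamma}x^{\alpha s}=\frac{\Gamma(\alpha s+1)}{\Gamma(\alpha s-\lfloor\gamma\rfloor)}x^{\alpha s-\gamma}$, with the convention $b'_{s,i}=0$ handling the monomials annihilated when $s\alpha\in\mathbb{N}_{0}$ and $s\alpha<\gamma$. The paper's proof is in fact terser than yours, simply citing linearity and stating the power-rule identity, whereas you additionally spell out the falling-factorial/Gamma bookkeeping behind that identity.
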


\begin{proof}

From the linearity of the conformable derivative see \cite{ abdeljawad2015conformable, khalil2014new}, we have: 
\begin{equation*}
D^{\gamma }P_{\alpha i}^{\ast }\left( x\right)
=\sum\limits_{s=0}^{i}b_{s,i}^{~}D^{\gamma }x^{\alpha
s}=\sum\limits_{s=0}^{i}b_{s,i}
^{\prime }\frac{\Gamma \left( \alpha
s+1\right) }{\Gamma \left( \alpha s-\left\lfloor \gamma \right\rfloor
\right) }x^{\alpha s-\gamma },
\end{equation*}
where $b_{s,i}^{\prime }=0$ when \bigskip $s\alpha \in \mathbb{N}_{0}$
and $s\alpha <\gamma$ on the otherwise $b_{s,i}^{\prime }=b_{s,i}$.
\end{proof}
\begin{thm}\label{sec12, thm2}
Suppose that $y_{m}\left( x\right) $ is an approximated function
given by means of  shifted conformable fractional Legendre polynomials \eqref{sec12, eq5}, then
we have:
\begin{equation}\label{sec12, eq7}
D^{\gamma }y_{m}\left(
x\right)=\sum\limits_{i=0}^{m}\sum\limits_{s=0}^{i}a_{i}R_{i,s}^{\left(
\gamma \right) }x^{s\alpha -\gamma },
\end{equation}
where $R_{i,s}^{\left(
\gamma \right) }=b_{s,i}^{\prime }\frac{\Gamma \left(\alpha s+1\right) }{
\Gamma \left( \alpha s-\left\lfloor \gamma \right\rfloor \right) }.$
\end{thm}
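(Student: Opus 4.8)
The plan is to combine the linear expansion \eqref{sec12, eq5} with the derivative formula established in Theorem \ref{sec12, thm1}. Since $y_m(x)=\sum_{i=0}^{m}a_i P_{\alpha i}^{\ast}(x)$ is a finite linear combination and the conformable fractional derivative $D^{\gamma}$ is linear (as recorded in \cite{abdeljawad2015conformable, khalil2014new} and used in the proof of Theorem \ref{sec12, thm1}), I would first write
\begin{equation*}
D^{\gamma}y_m(x)=\sum_{i=0}^{m}a_i\, D^{\gamma}P_{\alpha i}^{\ast}(x).
\end{equation*}

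Next I would substitute the explicit expression for $D^{\gamma}P_{\alpha i}^{\ast}(x)$ from \eqref{sec12, eq6}, namely
\begin{equation*}
D^{\gamma }P_{\alpha i}^{\ast }\left( x\right)=\sum\limits_{s=0}^{i}b_{s,i}^{\prime}\frac{\Gamma \left( \alpha s+1\right) }{\Gamma \left( \alpha s-\left\lfloor \gamma \right\rfloor\right) }x^{s\alpha -\gamma },
\end{equation*}
into the previous display. Interchanging the (finite) summations and recognizing the coefficient $R_{i,s}^{(\gamma)}=b_{s,i}^{\prime}\frac{\Gamma(\alpha s+1)}{\Gamma(\alpha s-\lfloor\gamma\rfloor)}$ then yields exactly \eqref{sec12, eq7}. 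Because both sums are finite, no convergence issue arises and the interchange is trivially justified.

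Honestly, this statement is essentially a corollary of Theorem \ref{sec12, thm1} together with linearity, so there is no real obstacle; the only point that deserves a remark is the convention on $b_{s,i}^{\prime}$ (it vanishes precisely when $s\alpha\in\mathbb{N}_0$ and $s\alpha<\gamma$, otherwise equals $b_{s,i}$), which is inherited verbatim from Theorem \ref{sec12, thm1} and guarantees that the terms with a singular or identically-zero contribution are correctly suppressed. I would simply note that this definition of $R_{i,s}^{(\gamma)}$ carries over and present the two-line computation above as the proof.
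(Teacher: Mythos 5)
Your argument is exactly the paper's: the paper also proves Theorem \ref{sec12, thm2} by invoking the linearity of the conformable fractional derivative together with Theorem \ref{sec12, thm1}, only more tersely. Your spelled-out substitution of \eqref{sec12, eq6} into \eqref{sec12, eq5}, with the convention on $b_{s,i}^{\prime}$ carried over, is correct and complete.
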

\begin{proof}
In virtue of the linearity property of the CFD and using the result of theorem \ref{sec12, thm1}, the proof is therefore completed.
\end{proof}
Now, suppose that  the generalized linear multi-order conformable fractional
differential equation is such that: 
\begin{equation}\label{sec12, eq8}
D^{\gamma }y\left(x\right) +\sum\limits_{r=1}^{k}A_{r}D^{\gamma_{r}}y\left( x\right)+A_{k+1}y\left( x\right) 
=A_{k+2}g\left( x\right),\ x\in \left[ 0,1\right] 
\end{equation}
with the initial conditions
\begin{equation}\label{sec12, eq9}
y^{\left(
i\right) }\left( 0\right) =d_{i},i=0,1,2,\dots,\left\lceil \gamma \right\rceil-1,
\end{equation}
where $0<\gamma _{1}<\gamma _{2}<\dots< \gamma
_{k}<\gamma,$ $ D^{\gamma }$ refers to the conformable 
fractional derivative of order $\gamma$,  $g\left(x\right) $ are known to be continuous functions
 and $d_{i},i=0,1,2,\dots,\left\lceil \gamma \right\rceil -1$ are some constants. 
 
Suppose that  the conformable fraction differential equation \eqref{sec12, eq8} has a solution in the form: 
\begin{equation}\label{sec12, eq10}
y_{m}\left( x\right)
=\sum\limits_{i=0}^{m}a_{i}P_{\alpha i}^{\ast }\left( x\right) 
\end{equation}
It can be easily deduced from  \eqref{sec12, eq8}, \eqref{sec12, eq10} and the theorem \ref{sec12, thm2} that 
\begin{equation}\label{sec12, eq11}
\sum\limits_{i=0}^{m}\sum\limits_{s=0}^{i}a_{i}R_{i,s}^{\left(
\gamma \right) }
x^{s\alpha -\gamma
}+\sum\limits_{r=1}^{k}A_{r}\left\{\sum\limits_{i=0}^{m}\sum\limits_{s=0}^{i}a_{i}R_{i,s}
^{\left( \gamma_{r}\right) }x^{s\alpha
-\gamma _{r}}\right\}+A_{k+1}\sum\limits_{i=0}^{m}a_{i}P_{\alpha i}^{\ast }\left(
x\right)=A_{k+2}g\left( x\right) 
\end{equation}
Collocating Equation \eqref{sec12, eq11} at points $x_{p},p=1,2,\dots,\left(m+1-\left\lceil \gamma \right\rceil \right),$ we have: 
\begin{equation}\label{sec12, eq12}
\sum\limits_{i=0}^{m}\sum\limits_{s=0}^{i}a_{i}R_{i,s}^{\left( \gamma \right) }x_{p}^{s\alpha -\gamma
}+\sum\limits_{r=1}^{k}A_{r}\left\{\sum\limits_{i=0}^{m}\sum\limits_{s=0}^{i}a_{i}R_{i,s}
^{\left( \gamma_{r}\right) }x_{p}^{s\alpha
-\gamma _{r}}\right\}+A_{k+1}\sum\limits_{i=0}^{m}a_{i}P_{\alpha i}^{\ast }\left(
x_{p}\right)=A_{k+2}g\left( x_{p}\right) 
\end{equation}
Using the roots of shifted conformable fractional Legendre polynomials
 $P_{\alpha \left( m+1-\left\lceil \gamma \right\rceil
\right) },$ we get suitable collocation points. Moreover by employing \eqref{sec12, eq10} into the initial conditions \eqref{sec12, eq9}, we obtain
 $\left\lceil \gamma \right\rceil $-equations.  
This gives a linear algebraic system  consisting of $\left( m+1\right) $ equations in the unknowns $
a_{i},i=0,1,2,...,m.$ 
 By solving this system,  the solution of the initial value problem \eqref{sec12, eq8} can be determined. 
 
The above method can be employed to solve some interesting  fractional differential equations. For examples we have 
\begin{ex}[Bagley-Torvik equation \cite{saadatmandi2010new}]
The inhomogeneous Bagley-Torvik initial value problem is
\begin{equation}\label{sec12, eq13}
D^{2}y\left( x\right) +D^{3/2}y\left(
x\right) +y\left( x\right) =1+x,~~ x\in  [0,1]
\end{equation}
with the initial assumption 
\begin{equation}\label{sec12, eq14}
y\left( 0\right) =1,   y^{\prime
}\left( 0\right) =1
\end{equation}
\end{ex}
The conformable fractional differential equation \eqref{sec12, eq13} has an exact solution as $ y\left( x\right) =1+x.$  
For $m=2$ and $\alpha =1,$ we derive the approximated
analytical solution of \eqref{sec12, eq13} as follows: 
\begin{equation}\label{sec12, eq15}
y_{2}\left( x\right) =\sum\limits_{i=0}^{2}a_{i}P_{i}^{\ast }\left( x\right) 
\end{equation}
Inserting the initial conditions into Equation \eqref{sec12, eq15} we have:
\begin{equation}\label{sec12, eq16}
a_{0}-a_{1}+a_{2}=1,   2a_{1}-6a_{2}=1
\end{equation}
For the collocation point $x_{1}=0.5$ which is the root of the
SCFLPs, for $\alpha =1$,  equation \eqref{sec12, eq11} can be written as
\begin{equation}\label{sec12, eq17}
a_{0}+19.985281374238571a_{2}=1.5
\end{equation}
From \eqref{sec12, eq16} and \eqref{sec12, eq17} we have: 
\begin{equation}\label{sec12, eq18}
a_{0}=3/2,~\ a_{1}=1/2,~\ a_{2}=0
\end{equation}
Therefore, the obtained solution is $y\left( x\right)
=a_{0}P_{0}^{\ast }\left( x\right) +a_{1}P_{1}^{\ast }\left( x\right) +a_{2}P_{2}^{\ast }\left(
x\right) =1+x$ which
represents the exact solution of  \eqref{sec12, eq13}.

\begin{ex}[\cite{ccerdik2020numerical}]
Suppose that we have the following initial value problem with variable
coefficients 
\begin{equation}\label{sec12, eq19}
D^{3/2}y\left( x\right) +2y^{\prime
}\left( x\right) +3\sqrt{x} D^{1/2}y\left( x\right) +\left( 1-x\right)
y\left( x\right) =2\sqrt{x}+4x+7x^{2}-x^{3},~\ \ x\in \lbrack 0,1]
\end{equation}
with the primary  conditions 
\begin{equation}\label{sec12, eq20}
y\left( 0\right)=0,  y^{\prime }\left( 0\right) =0
\end{equation}
\end{ex}
Equation \eqref{sec12, eq19} has an exact solution in the form  $y\left( x\right) =x^{2}$ \newline
For $m=2$ and $\alpha =1,$ we obtain the approximated analytical solution of \eqref{sec12, eq19} as follows: 
\begin{equation}\label{sec12, eq21}
y_{2}\left( x\right)
=\sum\limits_{i=0}^{2}a_{i}P_{i}^{\ast }\left( x\right) 
\end{equation}
Substituting the initial conditions into Equation \eqref{sec12, eq21} we have:
\begin{equation}\label{sec12, eq22}
a_{0}-a_{1}+a_{2}=0,   2a_{1}-6a_{2}=0
\end{equation}
For the collocation point $x_{1}=0.5$ which represent the root of the SCFLPs, $P_{1}^{\ast }\left( x\right) $ and for $\alpha =1$,  equation \eqref{sec12, eq11} can be
written as
\begin{equation}\label{sec12, eq23}
\frac{1}{2}
a_{0}+7a_{1}+8.235281374238571a_{2}=5.039213562373095
\end{equation}
From \eqref{sec12, eq22} and \eqref{sec12, eq23} we have: 
\begin{equation}\label{sec12, eq24}
a_{0}=1/3,  a_{1}=1/2,~\ a_{2}=1/6
\end{equation}
Therefore, we have $y\left( x\right) =a_{0}P_{0}^{\ast }\left(x\right) + a_{1}P_{1}^{\ast }\left( x\right) +a_{2}P_{2}^{\ast }\left( x\right) =x^{2}$
 which is in fact the exact solution of Equation \eqref{sec12, eq19}.

\begin{ex}
Let the  initial value problem with variable
coefficients be such that 
\begin{equation}\label{sec12, eq25}
D^{1/2}y\left( x\right) +\sqrt{x}y\left(
x\right) =1+2x,~\ \ x\in \lbrack 0,1]
\end{equation}
with the initial
condition 
\begin{equation}\label{sec12, eq26}
y\left( 0\right) =0
\end{equation}
\end{ex}
Consequently, we infer that the exact solution in the form $y\left( x\right) =2\sqrt{x}$ \newline
For $m=1$ and $\alpha =1/2,$ we get the approximated analytical solution of
Equation \eqref{sec12, eq25} in the form: 
\begin{equation}\label{sec12, eq27}
y_{1}\left( x\right)
=\sum\limits_{i=0}^{1}a_{i}P_{\left(\frac{1}{2} \right)i}^{\ast }\left( x\right)=a_{0}P_{0}^{\ast }\left( x\right)
+a_{1}P_{\left( \frac{1}{2}\right) }^{\ast }\left(x\right) 
\end{equation}
Substituting the initial condition into  \eqref{sec12, eq27} we have:
\begin{equation}\label{sec12, eq28}
a_{0}-a_{1}=0
\end{equation} 
For the collocation point $
x_{1}=0.25$ which is the root of the shifted conformable fractional Legendre
polynomials $P_{\left( \frac{1}{2}\right)}^{\ast }\left( x\right) =2x^{\left(
1/2\right) }-1$, equation \eqref{sec12, eq11} is given by
\begin{equation}\label{sec12, eq29}
\frac{1}{2}
a_{0}+a_{1}=\frac{3}{2}
\end{equation}
From \eqref{sec12, eq28} and \eqref{sec12, eq29} we have  
$
a_{0}=1,  a_{1}=1$.
Therefore the resulted
solution is $y\left( x\right) =a_{0}P_{0}^{\ast }\left(x\right) +a_{1}P_{\left( 
\frac{1}{2}\right) }^{\ast }\left( x\right) =2\sqrt{x}$
 which is the exact solution of  \eqref{sec12, eq25}.
\begin{rem}
The problem  \eqref{sec12, eq25} is newly presented with more general assumption since we consider $\alpha = \frac{1}{2}$
\end{rem}
\section{Conclusions and closing remarks}\label{Sec9}
There are many definitions of the fractional derivatives. One of the most recent ones is the conformable fractional derivative. Accordingly, the main goal of the this work is to study conformable  fractional  Legendre polynomials which appear in many different areas of mathematics and physics. 
The materials developed in sections 3-7 provide  important properties of these conformable fractional Legendre polynomials.   
First, we laid down the conformable fractional Legendre polynomials via different generating functions. This gave tools to study  their main mathematical and convergence properties. Further, we continue the work initiated by \cite{hammad2014legendre} concerning conformable fractional Legendre polynomials for which we established orthogonality property with related  applications.

 Over the last years, fractional differential equations were focus of intense research due to their importance in the modeling of several physical  phenomena from different areas of science and engineering.

In the current paper, as the SCFLPs and their fundamental properties together with the collocation method are provided, this gives a way to solve some interesting  fractional differential equations subjected to specific initial conditions. Of course, the fractional derivatives are described in the conformable sense. The  truncated Legendre series is taking into account, from which it can be easily  determine the solutions for arbitrary independent variables.  
The validity and applicability of the presented technique  are examined through the given examples. We derive  exact solutions of some examples with finite number of terms. 
We believe that the prescribed scheme can be applied to boundary value problems for CFDEs  and also extended to conformable fractional partial differential equations as well as can be applied to get numerical solutions of the CFDEs via SCFLPs to support our obtained results. It should be observed that the authors in \cite{kazem2013fractional} constructed fractional- order Legendre functions to obtain  solutions of some fractional- order differential equations. Their techniques depending on adapting Caputo's definition by using Riemann-Liouville fractional integral operator as well as the Tau method is involved. In fact their results generalized to fractional setting those  given in \cite{saadatmandi2010new}.

Along with the technique used in \cite{kazem2013fractional}, we generalized those results given in \cite{ccerdik2020numerical} to conformable fractional setting. In fact a conformable fractional differential equations are solved by using collocation method depending  on the SCFLPs. 

It is worth mentioning that while the process of submitting this paper,  the very recent work of A. El-Ajou \cite{el2020modification} just appeared to release a modified construction of conformable fractional calculus and this promises new results. Also, this modification will open the door to find a physical meaning of these modified definitions in various fields of applied sciences. Looking forward for such development in the forthcoming work.

\bigskip    
\noindent \textbf{Acknowledgements:} 
\noindent
The first and third authors would to express their appreciations and thanks to Egyptian ASRT for granting them the project no 6479 via Science UP Faculty of Science Grants 2020.


\begin{thebibliography}{99}


\bibitem{podlubny1999fractional} Podlubny, I. (1999). Fractional
differential equations, Acad. Press, London, 
E2.

\bibitem{he1998nonlinear} He, J. (1998). Nonlinear oscillation with
fractional derivative and its applications.
In International Conference on
Vibrating Engineering,Dalian,
China,  98,  288--291. 

\bibitem{he1999some} He, J. (1999). Some applications of nonlinear fractional
differential equations and their
approximations. Bull. Sci. Technol,
15(2):86--90.

\bibitem{moaddy2011non} Moaddy, K., Momani, S., and Hashim, I. (2011). The
non-standard finite difference
scheme for linear fractional pdes in fluid
mechanics. Computers \&  Mathematics with
Applications,
61(4):1209--1216.

\bibitem{agarwal2010survey} Agarwal, R. P., Benchohra, M., and Hamani, S.
(2010). A survey on existence results for boundary value problems of
nonlinear fractional differential equations and inclusions. Acta Applicandae
Mathematicae,
109(3):973--1033.

\bibitem{kilbas2006theory} Kilbas, A. A. A., Srivastava, H. M., and
Trujillo, J. J. (2006). Theory and applications
of fractional differential
equations, volume 204. Elsevier Science
Limited.
\bibitem{kosmatov2016resonant} Kosmatov, N. and Jiang, W. (2016). Resonant
functional problems of fractional order.
Chaos,  Solitons \& Fractals,
91:573--579.
\bibitem{lu2018time} Lu, C., Fu, C., and Yang, H. (2018). Time-fractional
generalized Boussinesq equation
for Rossby solitary waves with dissipation
effect in stratified fluid and conservation laws as
well as exact
solutions. Applied Mathematics and Computation,
327:104--116.

\bibitem{khalil2014new} Khalil, R., Al Horani, M., Yousef, A., and Sababheh,
M. (2014). A new definition of
fractional derivative. Journal of
Computational and Applied Mathematics,
264:65--70.

\bibitem{abdeljawad2015conformable} Abdeljawad, T. (2015). On conformable
fractional calculus. Journal of Computational and Applied Mathematics,
279:57--66.

\bibitem{el2020modification} El-Ajou, A. (2020). A modification to the conformable fractional calculus with some applications. Alexandria Engineering Journal.

\bibitem{acan2017conformable} Acan, O., Firat, O., Keskin, Y., and Oturanc,
G. (2017). Conformable variational iteration method. New Trends in
Mathematical Sciences,
5(1):172--178.


\bibitem{ammi2019existence} Ammi, M. R. S. and Torres, D. F. (2019).
Existence of solution to a nonlocal conformable fractional thermistor
problem. Commun. Fac. Sci. Univ. Ank. Ser. A1 Math. Stat,
68:1061--1072.

\bibitem{anderson2016results} Anderson, D. R. and Ulness, D. J. (2016).
Results for conformable differential equations.
preprint.
\bibitem{asawasamrit2016periodic} Asawasamrit, S., Ntouyas, S. K.,
Thiramanus, P., and Tariboon, J. (2016). Periodic
boundary value problems
for impulsive conformable fractional integro-differential equations.
Boundary Value Problems,
2016(1):122.

\bibitem{benkhettou2016conformable} Benkhettou, N., Hassani, S., and Torres,
D. F. (2016). A conformable fractional calculus
on arbitrary time scales.
Journal of King Saud University-Science,
28(1):93--98.

\bibitem{bucur2016nonlocal}Bucur, C. and Valdinoci, E. (2016). Nonlocal
diffusion and applications, volume
1. Switzerland: Springer International Publishing.


\bibitem{hesameddini2015numerical} Hesameddini, E. and Asadollahifard, E.
(2015). Numerical solution of multi-order fractional differential equations
via the sinc collocation method. Iranian Journal of Numerical
Analysis and
Optimization,
5(1):37--48.

\bibitem{nwaeze2016mean} Nwaeze, E. R. (2016). A mean value theorem for the
conformable fractional calculus on
arbitrary time scales. Progr. Fract.
Differ. Appl,
2(4):287--291.

\bibitem{sitho2018noninstantaneous} Sitho, S., Ntouyas, S. K., Agarwal, P.,
and Tariboon, J. (2018). Noninstantaneous 
impulsive inequalities via
conformable fractional calculus. Journal of Inequalities and Applications,
2018(1):261.


\bibitem{unal2017solution} \"{U}nal, E. and G\"{o}kdo\u{g}an, A. (2017). Solution of
conformable fractional ordinary differential 
equations via differential
transform method. Optik,
128:264--273.

\bibitem{khitab2005predictive} Khitab, A., Lorente, S., and Ollivier, J.
(2005). Predictive model for chloride penetration
through concrete.
Magazine of Concrete Research,
57(9):511--520.

\bibitem{thomas1999modelling} Thomas, M. D. and Bamforth, P. B. (1999).
Modelling chloride diffusion in concrete: effect of 
fly ash and slag.
Cement and concrete research,
29(4):487--495.

\bibitem{anderson2015properties} Anderson, D. R. and Ulness, D. J. (2015).
Properties of the Katugampola fractional derivative with potential
application in quantum mechanics. Journal of Mathematical Physics,
56(6):063502.

\bibitem{yang2018conformable} Yang, S., Wang, L., and Zhang, S. (2018).
Conformable derivative: application to nondarcian 
flow in low-permeability
porous media. Applied Mathematics Letters,
79:105--110.
\bibitem{zhao2018new} Zhao, D., Pan, X., and Luo, M. (2018). A new framework
for multivariate general conformable fractional 
calculus and potential
applications. Physica A: Statistical Mechanics and its Applications,
510:271--280.
\bibitem{zhou2018conformable} Zhou, H., Yang, S., and Zhang, S. (2018).
Conformable derivative approach to anomalous diffusion.
 Physica A:
Statistical Mechanics and its Applications,
491:1001--1013.

\bibitem{saadatmandi2010new} Saadatmandi, A. and Dehghan, M. (2010). A new
operational matrix for
solving fractional-order differential equations.
Computers \& mathematics with
Applications,
59(3):1326--1336.


\bibitem{rida2011fractional} Rida, S. and Yousef, A. M. (2011). On the
fractional order Rodrigues formula for the
Legendre polynomials. Advances
and Applications in Mathematical Sciences,
10(5):509--517.


\bibitem{kazem2013fractional} Kazem, S., Abbasbandy, S., and Kumar, S.
(2013). Fractional-order Legendre functions
for solving fractional-order
differential equations. Applied Mathematical
Modeling,
37(7):5498--5510.

\bibitem{hammad2014legendre} Abu Hammad, M. A. and Khalil, R. (2014). Legendre
fractional differential equation and
legender fractional polynomials.
International Journal of Applied Mathematics
Research,
3(3):214.

\bibitem{zayed2020fractional} Zayed, M., Abul-Ez, M., Abdalla, M. and Saad, N. (2020). On the fractional order Rodrigues formula for the shifted Legendre-type matrix polynomials. Mathematics, 8(1), 136.

\bibitem{rajkovic2010legendre} Rajkovi\'{c}, P. and Kiryakova, V. S. (2010).
Legendre-type special functions defined by
fractional order Rodrigues
formula.  AIP Conference Proceedings,  1301(1),
644--649.

\bibitem{ccerdik2020numerical} \c{C}erdik Yaslan, H. and  Mutlu, F. (2020). Numerical solution of the conformable differential equations via shifted Legendre polynomials. International Journal of Computer Mathematics, 97(5), 1016--1028.


\bibitem{meng2019extremal} Meng, S.and  Cui, Y. (2019). The extremal solution to conformable fractional differential equations involving integral boundary condition. Mathematics, 7(2), 186.

\bibitem{rainville1969special} Rainville, Earl D. Special Functions. 1960. Chelsea, New York (1969).

\bibitem{whittaker1927modern} Whittaker, E. T., and G. N. Watson. "Modern Analysis, 4th edit." (1927).


\bibitem{szeg1939orthogonal} Szeg, Gabor. Orthogonal polynomials. Vol. 23. American Mathematical Soc., 1939.

\bibitem{whittaker1949sries} Whittaker, J. and  Gattegno, C. (1949). Sur les s\'{e}ries de base de polynbmes quelconques. Gauthier-Villars, Paris.

\bibitem{boas1958polynomial} Boas, Ralph Philip, and Robert Creighton Buck. Polynomial Expansions of Analytic Functions With 16 Fig. 1958.

\bibitem{abul1990basic} Abul-Ez, M. A., and D. Constales. (1990).Basic sets of pofynomials in Clifford analysis. Complex Variables, Theory and Application: An International Journal 14, no. 1-4 : 177-185.
\bibitem{zayed2012generalized} Zayed, M., Abul-Ez, M. and  Morais, J. P. (2012). Generalized derivative and primitive of Cliffordian bases of polynomials constructed through Appell monomials. Computational Methods and Function Theory, 12(2), 501-515.
\bibitem{zayed2020generalized} Zayed, M. (2020). Generalized Hadamard Product Bases of Special Monogenic Polynomials. Advances in Applied Clifford Algebras, 30(1), 10.

\bibitem{rusev2005classical} Rusev, P. (2005). Classical orthogonal
polynomials and their associated functions in
Comple Domain. Drinov
Academic Publishing
House.







\end{thebibliography}
\end{document}